\documentclass{amsart}

\usepackage{amssymb}
\usepackage{graphicx}
\usepackage{amsmath}
\usepackage{amsthm}
\usepackage{pgf,tikz}
\usepackage{mathrsfs}
\usepackage{ulem}
\usetikzlibrary{arrows}
\usepackage[all]{xy}
\usepackage[unicode=true,
 bookmarks=false,
 breaklinks=false,pdfborder={0 0 1},backref=section,colorlinks=false]
 {hyperref}

\newtheorem{theorem}{Theorem}[section]

\newtheorem{definition}[theorem]{Definition}
\newtheorem{example}[theorem]{Example}
\newtheorem{lemma}[theorem]{Lemma}

\newtheorem{proposition}[theorem]{Proposition}
\newtheorem{remark}[theorem]{Remark}

\begin{document}

\title[On a topological duality for Stonean Hilbert algebras]
{On a topological duality for Stonean Hilbert algebras}

\address{Facultad de Ciencias y Artes, Universidad Cat\'olica de \'Avila, C/Canteros s/n, 05005 \'Avila, Espa\~{n}a.}
\email{ismaelm.calomino@ucavila.es}
\author{Ismael Calomino}

\address{Departamento de Matem\'aticas, Facultad de Econom\'ia y Administraci\'on, Universidad Nacional del Comahue, C/Buenos Aires 1400, 8300 Neuqu\'en, Argentina.}
\email{dmontang@gmail.com}
\author{Daniela Montangie}

\subjclass[2010]{Primary 06A12; Secondary 03G25, 06D20}
\keywords{Hilbert algebra with supremum, Stone's identity, topological duality}

\begin{abstract}
The aim of this paper is to present a topological duality for the algebraic category of Stonean Hilbert algebras and certain morphisms between them, called $\neg$-morphisms. Also, we apply this duality to characterize the Heyting algebra of $\alpha$-ideals of a Stonean Hilbert algebra. 
\end{abstract}

\maketitle

\section{Introduction} 

The class of Hilbert algebras is the algebraic counterpart of the implicative fragment of intuitionistic propositional calculus and the implicative subreducts of Heyting algebras. Recall that a Hilbert algebra is an algebra ${\bf{A}} = \langle A, \to, 1 \rangle$ of type $(2,0)$ which satisfies the following conditions:
\begin{enumerate}
\item $a \to (b \to a) = 1$,
\item $(a \to (b \to c)) \to ((a \to b) \to (a \to c)) = 1$,
\item $a \to b = b \to a = 1$ implies $a = b$.
\end{enumerate}

In every Hilbert algebra we have the partial order $\leq$ given by $a \leq b$ if and only if $a \to b = 1$, called the natural order. Relative to the natural order on ${\bf{A}}$, $1$ is the greatest element, and if $A$ has a first element $0$, ${\bf{A}}$ is called bounded. Hilbert algebras form a variety (\cite{Diego}) and were studied by several authors in different directions: Hilbert algebras with lattice operations (\cite{BG,CM1,Gaitan1}), Hilbert algebras with additional operators (\cite{CM2,CM0,CSM,GLCC,CM4,Figallo}), prelinear Hilbert algebras (\cite{CCSM,SMS}) and Stonean Hilbert algebras (\cite{Saeid1,Saeid2,Gaitan}). 

A duality theory for Hilbert algebras was developed in \cite{CCM} through ordered topological spaces and simplified in \cite{CM1} using sober topological spaces with a basis of open and compact sets satisfying an additional condition, where the order can be defined with the topological closure. In particular, in \cite{CM1}, the authors develop a duality for the class of Hilbert algebras where the associated order is a join-semilattice, named Hilbert algebras with supremum or $H^{\vee}$-algebras. These results were of great importance to begin studying different modal style operators from a topological point of view (for more details see \cite{CM0,CSM,CM4}). Recently in \cite{Gaitan}, a topological representation is presented for the class of Stonean Hilbert algebras, which turn out to be bounded Hilbert algebras with supremum satisfying Stone's identity 
\[
\neg x \vee \neg \neg x = 1,
\]
where $\neg x = x \to 0$. The main goal of this note is to present an alternative topological duality  to that given in \cite{Gaitan} following the approach developed in \cite{CM1,CM4} and studying certain morphisms (called $\neg$-morphisms) that we consider more suitable to characterize the variety of Stonean Hilbert algebras. 

The paper is organized as follows. In Section \ref{sec2} we give some basic results about bounded Hilbert algebras with supremum and recall the topological duality given in \cite{CM1,CM4}. In Section \ref{sec3} we study the class of Stonean Hilbert algebras and give some characterizations through regular elements. In Section \ref{sec4} we introduce $\neg$-morphisms, which turn out to be morphisms different from $\vee0$-semi-homomorphisms and $\vee0$-homomorphisms. In Section \ref{sec5} we present a topological duality for the algebraic category of Stonean Hilbert algebras with $\neg$-morphisms as arrow through Stonean $H_{0}^{\vee}$-spaces and certain spectral maps that fulfill an additional condition. Finally, in Section \ref{sec6}, we apply the duality of Section \ref{sec5} to obtain a topological representation of Heyting algebra of $\alpha$-ideals of a Stonean Hilbert algebra.

\section{Preliminaries} \label{sec2}

\subsection{Bounded Hilbert algebras with supremum}

Recall the notion of bounded Hilbert algebra where the associated order is a join-semilattice (\cite{CM1,C1}).

\begin{definition}
A Hilbert algebra with supremum, or $H^{\vee}$-algebra for short, is an algebra ${\bf{A}} = \langle A, \to, \vee, 1 \rangle$ of type $(2,2,0)$ where $\langle A, \to, 1 \rangle$ is a Hilbert algebra, $\langle A, \vee, 1 \rangle$ is a join-semilattice with greatest element, and $a \to b = 1$ if and only if $a \vee b = b$, for all $a,b \in A$.
\end{definition}

\begin{theorem} \cite{C1} \label{caracterization}
Let us consider an algebra ${\bf{A}} = \langle A, \to, \vee, 1 \rangle$ of type $(2,2,0).$ Then ${\bf{A}}$ is an $H^{\vee}$-algebra if and only if
\begin{enumerate}
\item $\langle A, \to, 1 \rangle$ is a Hilbert algebra,
\item $\langle A, \vee, 1 \rangle$ is a join-semilattice with greatest element,
\item $A$ satisfies the following equations
\begin{enumerate}
\item $a \to (a \vee b) = 1$,
\item $(a \to b) \to ((a \vee b) \to b) = 1$.
\end{enumerate}
\end{enumerate}
\end{theorem}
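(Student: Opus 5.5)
The argument has two directions. Throughout I use only the standard Hilbert-algebra facts that follow from axioms (1)–(3): the natural order $\leq$, given by $a\leq b$ iff $a\to b=1$, is a partial order with top element $1$; $1\to a=a$; $a\to a=1$; $a\to 1=1$; the map $x\mapsto a\to x$ is monotone and $x\mapsto x\to b$ is antitone; and exportation/contraposition in the form $a\to(b\to c)=b\to(a\to c)$. I read condition (2) as saying that $\vee$ is the join of the semilattice order $x\sqsubseteq y$ iff $x\vee y=y$, with $1$ as the greatest element.

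\emph{($\Rightarrow$).} Let ${\bf A}$ be an $H^{\vee}$-algebra. Conditions (1) and (2) hold by definition. The defining clause ($a\to b=1$ iff $a\vee b=b$) says exactly that $\leq$ and $\sqsubseteq$ coincide, so $a\vee b$ is the supremum of $a,b$ in the natural order.
\begin{itemize}
\item For (a): $a\leq a\vee b$, hence $a\to(a\vee b)=1$.
\item For (b): it suffices to show $a\to b\leq (a\vee b)\to b$. Put $c=a\to b$. By exportation this is equivalent to $a\vee b\leq c\to b$.
\item Now $a\leq c\to b$, because $a\to(c\to b)=c\to(a\to b)=c\to c=1$.
\item Also $b\leq c\to b$ by axiom (1).
\item Hence the supremum satisfies $a\vee b\leq c\to b$, which gives (b).
\end{itemize}

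\emph{($\Leftarrow$).} Assume (1), (2) and (3). We must show that $a\to b=1$ iff $a\vee b=b$.
\begin{itemize}
\item Suppose $a\vee b=b$. Then (a) gives $a\to b=a\to(a\vee b)=1$.
\item Conversely, suppose $a\to b=1$. Then (b) gives $1\to((a\vee b)\to b)=1$, so $(a\vee b)\to b=1$, i.e. $a\vee b\leq b$.
\item By (a) with the arguments swapped (using commutativity of $\vee$), $b\to(b\vee a)=1$, i.e. $b\leq a\vee b$.
\item Axiom (3) then yields $a\vee b=b$.
\end{itemize}

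The main point needing care is step (b) in the forward direction. The supremum property has to be applied in the natural order, and this relies on first noting that the semilattice order of $\vee$ coincides with $\leq$. After that, the proof only uses the exportation identity and axiom (1).
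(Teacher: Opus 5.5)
The paper gives no proof of this statement; it is quoted from \cite{C1}, so your argument has to stand on its own, and it does. Both directions are correct and complete. The only Hilbert-algebra facts you actually use are standard consequences of the axioms: $1\to a=a$, $a\to a=1$, and the exchange law $a\to(b\to c)=b\to(a\to c)$ (which you call exportation).

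For comparison, there is a shorter route to (b) in the forward direction. The paper quotes in Section~\ref{sec3} the identity $(a\vee b)\to c=(a\to c)\wedge(b\to c)$ from \cite{BG}, valid whenever $a\vee b$ exists. Taking $c=b$ gives $(a\vee b)\to b=a\to b$, so the inequality $a\to b\leq (a\vee b)\to b$ encoded by (b) is in fact an equality in every $H^{\vee}$-algebra.

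Your argument is exactly the relevant special case of the proof of that identity, namely exchange combined with the supremum property. It also gives the equality once you add the trivial reverse inequality: $(a\vee b)\to b\leq a\to b$ follows from $a\leq a\vee b$ and antitonicity of $x\mapsto x\to b$.

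Your approach buys self-containment, since it needs nothing beyond the Hilbert axioms. The route through \cite{BG} buys the stronger identity, which the paper later relies on anyway.
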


An $H^{\vee}$-algebra ${\bf{A}}$ is bounded, or $H_{0}^{\vee}$-algebra for short, if there exists an element $0 \in A$ such that $0 \to a = 1$, for all $a \in A$. We denote by $\mathrm{Hil}_{0}^{\vee}$ the variety of $H_{0}^{\vee}$-algebras.

Let ${\bf{A}} \in \mathrm{Hil}_{0}^{\vee}$. A non-empty subset $F$ of $A$ is called an implicative filter if $1 \in F$, and if $a, a \to b \in F$ then $b \in F$. Denote by $\mathrm{Fi}(A)$ the set of all implicative filters of ${\bf{A}}$. If $C \subseteq A$, then the implicative filter ${\mathrm{Fig}} (C)$ generated by $C$ can be
characterized as the set
\[
{\mathrm{Fig}} (C) = \{  a \in A \colon \exists a_{1}, \ldots, a_{n} \in C \text{ such that } (a_{1}, \ldots, a_{n} ;a)  =1 \},
\]
where 
\[
(a_{1}, \ldots, a_{n}; a)  = 
\left\{
\begin{array} {llllll}
a_{1} \to a & \text{if} & n=1, \\
a_{1} \to (a_{2}, \ldots, a_{n}; a) & \text{if} & n>1. \\
\end{array}
\right.
\]
A proper implicative filter $x$ is said to be irreducible if for any $F_{1}, F_{2} \in \mathrm{Fi}(A)$ such that $x = F_{1} \cap F_{2}$ implies $x = F_{1}$ or $x = F_{2}$. Denote by $\mathrm{X}(A)$ the set of all irreducible implicative filters of ${\bf{A}}$. Recall that a proper filter $x$ is irreducible if and only if for every $a,b \in A$ such that $a,b \notin x$, there exists $c \notin x$ such that $a,b \leq c$. We say that a implicative filter $M$ is maximal if for any implicative filter $F$, if $M \subseteq F$ then $M=F$ or $M=A$. We denote by $\mathrm{Max(A)}$ the set of all maximal implicative filters of ${\bf{A}}$. Note that ${\mathrm{Max}(A)} \subseteq \mathrm{X}(A)$. A proper implicative filter $P$ is said to be prime if $a \vee b \in P$ implies that either $a \in P$ or $b \in P$, for all $a,b \in A$. It is straightforward to see that a proper implicative filter is irreducible if and only if is prime. A non-empty downset $I$ of $A$ is said to be an ideal if $0 \in I$ and for all $a,b \in I$ we have $a \vee b \in I$. The set of all ideals of ${\bf{A}}$ is denoted by $\mathrm{Id}(A)$. We denote by $\mathrm{Idg}(C)$ the ideal generated by a subset $C$ of $A$. Then we have 
\[
\mathrm{Idg}(C) = \{ a \in A \colon \exists c_{1}, \ldots, c_{n} \in C \text{ such that } a \leq c_{1} \vee \ldots \vee c_{n} \}.
\]

The following result, proved in \cite{Busneag,MonteiroSymetric}, will be useful in this paper.

\begin{theorem} \label{separacion}
Let ${\bf{A}} \in \mathrm{Hil}_{0}^{\vee}$. Let $F \in \mathrm{Fi}(A)$ and $I \in \mathrm{Id}(A)$ be such that $F \cap I = \emptyset$. Then there exists $x \in \mathrm{X}(A)$ such that $F \subseteq x$ and $x \cap I = \emptyset$.
\end{theorem}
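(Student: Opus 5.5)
The plan is a Zorn's lemma argument followed by a verification that a filter which is maximal with respect to avoiding $I$ is prime. Since primeness and irreducibility coincide for proper implicative filters, this gives a member of $\mathrm{X}(A)$.

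First, consider the family
\[
\mathcal{F} = \{ G \in \mathrm{Fi}(A) \colon F \subseteq G \text{ and } G \cap I = \emptyset \},
\]
ordered by inclusion. It is non-empty because $F \in \mathcal{F}$. The union of a chain of implicative filters is again an implicative filter: given $a$ and $a \to b$ in the union, both lie in a common member of the chain. Such a union still contains $F$ and misses $I$. By Zorn's lemma, $\mathcal{F}$ has a maximal element $x$. Since $0 \in I$, we have $0 \notin x$, so $x$ is proper.

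The main step is to show that $x$ is prime. Suppose $a \vee b \in x$ but $a,b \notin x$. By maximality, $\mathrm{Fig}(x \cup \{a\})$ and $\mathrm{Fig}(x \cup \{b\})$ both meet $I$. Using the description of $\mathrm{Fig}$ and the Hilbert-algebra identity $(a_1,\dots,a_n;c)\leq$ stuff, one gets the standard fact that $\mathrm{Fig}(x\cup\{a\}) = \{c \colon a \to c \in x\}$; this is the deduction theorem, which follows from axioms (1) and (2). So there exist $i,j \in I$ with $a \to i \in x$ and $b \to j \in x$. Put $k = i \vee j \in I$. Since $i,j \leq k$ and $\to$ is monotone in its second argument, $a \to k \in x$ and $b \to k \in x$.

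The key obstacle is to deduce $(a \vee b) \to k \in x$ from these two facts; once we have it, $a \vee b \in x$ forces $k \in x \cap I$, which is a contradiction. In a general $H^{\vee}$-algebra, $(a\to k)\wedge(b\to k)$ need not exist, so I would argue via filters. I would show that for every irreducible filter $y \supseteq x$ (or directly, via Theorem~\ref{caracterization}(3b)), $a \to k \in y$ and $b \to k\in y$ imply $(a\vee b)\to k\in y$. The cleanest route I would try is to prove the $H^{\vee}$-identity
\[
(a \to k) \to \big((b \to k) \to ((a \vee b) \to k)\big) = 1.
\]
To derive it, take $c \geq a \to k,\ b\to k$ in the appropriate residuated sense. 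Use that $\{c\colon (a\vee b)\to c \in G\}$ is a filter containing... More concretely, I would work with the filter $G=\mathrm{Fig}(x\cup\{a\vee b\})=x$. The set $J=\{d \colon d \to k \in x\}$ is a downset containing $a$ and $b$, and I would use Theorem~\ref{caracterization}(3) to check that it is closed under $\vee$. Applying (3b) in the form $(d\to k)\to((d\vee e)\to k)$ with $d \vee k$ absorbed gives $(a\to k)\le ((a\vee k)\to k)$. Combining $a\vee b\le (a \vee k)\vee(b\vee k)$ with monotonicity should then yield the closure. This verification of the displayed identity is where I expect most of the work to lie. With it in hand, the argument concludes that $x \in \mathrm{X}(A)$, $F \subseteq x$ and $x\cap I=\emptyset$.
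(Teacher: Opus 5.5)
Your overall architecture is the standard one and is sound: Zorn's lemma on filters containing $F$ and missing $I$, properness from $0\in I$, the deduction theorem $\mathrm{Fig}(x\cup\{a\})=\{c\colon a\to c\in x\}$, and the choice $k=i\vee j\in I$. The paper gives no proof of this statement; it only cites the literature. However, the one step that actually uses the supremum is left unproved. You never establish that $a\to k\in x$ and $b\to k\in x$ force $(a\vee b)\to k\in x$ (or directly $k\in x$), and your sketch does not get there.

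Condition (3b) of Theorem~\ref{caracterization} only relates $d\to k$ to $(d\vee k)\to k$, a join with the consequent $k$ itself. It never combines two different antecedents $a$ and $b$. Your final move uses $a\vee b\le (a\vee k)\vee(b\vee k)$ and antitonicity of $\to$ in its first argument. That requires knowing $((a\vee k)\vee(b\vee k))\to k\in x$, which is again closure of $J=\{d\colon d\to k\in x\}$ under $\vee$ --- exactly what you set out to prove. So the argument is circular at its crux.

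Separately, your remark that $(a\to k)\wedge(b\to k)$ need not exist is wrong. As recalled in Section~\ref{sec3}, it exists and equals $(a\vee b)\to k$. This would not help anyway, since implicative filters need not be closed under existing meets.

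The missing idea is to use that $a\vee b$ is the least upper bound in the natural order, together with axiom (1) and the exchange law $u\to(v\to w)=v\to(u\to w)$. Put $u=(a\to k)\to((b\to k)\to k)$. Then
\[
a\to u=(a\to k)\to((b\to k)\to(a\to k))=1,\qquad b\to u=(a\to k)\to((b\to k)\to(b\to k))=1,
\]
so $a,b\le u$ and hence $a\vee b\le u$. Since $a\vee b\in x$ and $x$ is an upset, $u\in x$. Two applications of modus ponens with $a\to k,\ b\to k\in x$ then give $k\in x$, contradicting $x\cap I=\emptyset$. Exchanging antecedents in $(a\vee b)\to u=1$ yields precisely your displayed identity. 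With this inserted, your proof is complete.
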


Let ${\bf{A}} \in \mathrm{Hil}_{0}^{\vee}$. If $a \in A$, we shall write $\neg a := a \to 0$. 

\begin{lemma} \cite{Diego} \label{lemma_pri}
Let ${\bf{A}} \in \mathrm{Hil}_{0}^{\vee}$. Then the following properties are satisfied for all $a, b \in A$:
\begin{enumerate}
\item $a \leq \neg \neg a $,
\item if $a \leq b$ then $\neg b\leq  \neg a $,
\item $\neg a= \neg \neg \neg a $,
\item $a \rightarrow b \leq \neg b \rightarrow \neg a $,
\item $a \rightarrow \neg b = b\rightarrow \neg a $,
\item $\neg \neg (a \rightarrow b) \leq \neg \neg a \rightarrow \neg \neg b$,
\item $a \to \neg a = \neg a$.
\end{enumerate}
\end{lemma}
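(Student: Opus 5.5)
The plan is to derive all seven properties from standard Hilbert-algebra identities. Only the implicational structure is needed; the supremum plays no role. I will use the following well-known consequences of the axioms, valid in any Hilbert algebra (see \cite{Diego}):
\begin{enumerate}
\item $a \to a = 1$ and $a \to 1 = 1$;
\item $a \leq b \to a$;
\item $a \to (b \to c) = b \to (a \to c)$ (exchange);
\item $a \to (a \to b) = a \to b$;
\item $a \leq (a \to b) \to b$;
\item $a \to b \leq (b \to c) \to (a \to c)$ and $a \to b \leq (c \to a) \to (c \to b)$;
\item monotonicity: $\to$ is antitone in its first argument and isotone in its second.
\end{enumerate}
Every property in the lemma is then an instance with $c = 0$.

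The argument, item by item:
\begin{itemize}
\item Item (1) is $a \leq (a \to 0) \to 0$, i.e. the identity $a \leq (a \to b) \to b$ with $b = 0$.
\item Item (2) follows from antitonicity in the first argument: if $a \leq b$ then $b \to 0 \leq a \to 0$.
\item Item (4) is the transitivity-type inequality $a \to b \leq (b \to 0) \to (a \to 0)$.
\item Item (5) is exchange: $a \to (b \to 0) = b \to (a \to 0)$.
\item Item (7) is contraction: $a \to (a \to 0) = a \to 0$.
\item Item (3): applying (1) to $\neg a$ gives $\neg a \leq \neg\neg\neg a$. Applying (2) to $a \leq \neg\neg a$ gives $\neg\neg\neg a \leq \neg a$. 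Antisymmetry of the natural order yields equality.
\end{itemize}

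Item (6) is the only step needing a genuine computation, so I expect it to be the main obstacle. I will show $(a \to b) \to (\neg\neg a \to \neg\neg b) \geq \neg\neg(a\to b) \to (\neg\neg a \to \neg\neg b)$ in a usable form, or more directly argue as follows.
\begin{itemize}
\item By exchange, $\neg\neg(a \to b) \leq \neg\neg a \to \neg\neg b$ is equivalent to $\neg\neg a \leq \neg\neg(a\to b) \to \neg\neg b$.
\item Write $\neg\neg b = \neg b \to 0$ and use exchange again. It then suffices to show $\neg b \leq \neg\neg a \to \neg\neg\neg(a \to b)$, i.e., by (3), $\neg b \leq \neg\neg a \to \neg(a\to b)$.
\item By (5), $\neg\neg a \to \neg(a \to b) = (a \to b) \to \neg\neg\neg a = (a\to b) \to \neg a$.
\item So the goal reduces to $\neg b \leq (a \to b) \to \neg a$. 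By exchange this is equivalent to $a \to b \leq \neg b \to \neg a$, which is exactly item (4).
\end{itemize}

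The delicate point is keeping the exchange manipulations correct. Each uses only the equivalence $x \leq y \to z$ iff $y \leq x \to z$, which follows from exchange together with $x\le y \iff x\to y=1$.
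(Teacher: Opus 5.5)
Your proof is correct. Items (1), (2), (4), (5) and (7) are the $c=0$ instances of the standard Hilbert-algebra identities you list, (3) follows by antisymmetry, and your exchange argument for (6) is valid at every step: it reduces $\neg\neg(a\to b)\leq \neg\neg a\to\neg\neg b$ to $\neg b\leq (a\to b)\to\neg a$, which is item (4). The abandoned opening sentence about $(a\to b)\to(\neg\neg a\to\neg\neg b)$ can simply be deleted. The paper gives no proof here and only cites Diego, where these facts come from the same elementary identities, so your argument is the standard one.
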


\begin{lemma} \cite{Diego,C1} \label{lema1}
Let ${\bf{A}} \in \mathrm{Hil}_{0}^{\vee}$. Let $a \in A$ and $x \in \mathrm{X}(A)$. Then:
\begin{enumerate}
\item If $a \in x$, then $\neg a \notin x$.
\item $\neg a \notin x$ if and only if there exists $y \in \mathrm{X}(A)$ such that $x \subseteq y$ and $a \in y$.
\item $\neg a \notin x$ if and only if there exists $M \in {\mathrm{Max}(A)}$ such that $x \subseteq M$ and $a \in M$.
\end{enumerate}
\end{lemma}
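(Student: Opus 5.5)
The three items are proved in order, each feeding into the next.

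For (1), assume $a \in x$ and $\neg a \in x$. Then modus ponens inside the implicative filter $x$ gives $a \to 0 \in x$ together with $a\in x$, hence $0 \in x$. Since $0 \le b$ for every $b$ and filters are upsets (if $0\in x$ and $0\to b=1\in x$, then $b\in x$), we get $x=A$. This contradicts properness of the irreducible filter $x$.

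For (2), the backward direction follows from (1). If $x\subseteq y$ with $y\in \mathrm{X}(A)$ and $a\in y$, then $\neg a\notin y$, and so $\neg a\notin x$.

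For the forward direction of (2), assume $\neg a\notin x$ and consider the filter $F=\mathrm{Fig}(x\cup\{a\})$. The key step is to show that $F$ is proper, i.e.\ $0\notin F$. If $0\in F$, there would be $c_1,\dots,c_n\in x$ with $(c_1,\dots,c_n,a;0)=1$; the generation formula allows repetitions and arbitrary ordering, and we may put $a$ last by the exchange law $p\to(q\to r)=q\to(p\to r)$ of Hilbert algebras. Then $(c_1,\dots,c_n;\neg a)=1\in x$, and repeated modus ponens with $c_i\in x$ yields $\neg a\in x$, a contradiction. (If $a$ did not occur among the generators, then already $0\in x$, which is impossible.) This deduction-theorem step is the main technical point, and I would handle it with the standard fact $\mathrm{Fig}(x\cup\{a\})=\{b: a\to b\in x\}$, valid for implicative filters.

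Next apply Theorem \ref{separacion} to $F$ and the ideal $I=\{0\}$. Note that $\{0\}$ is indeed an ideal: it is a downset containing $0$ and closed under $\vee$. This produces $y\in\mathrm{X}(A)$ with $x\subseteq F\subseteq y$ and $a\in y$.

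For (3), the backward direction holds because $\mathrm{Max}(A)\subseteq\mathrm{X}(A)$, so (2) applies directly.

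For the forward direction of (3), start with $y$ from (2) and enlarge it to a maximal filter by Zorn's lemma, applied to the family of proper implicative filters containing $y$. This family is closed under unions of chains, since properness is equivalent to $0\notin F$. A maximal element $M$ of this family is a maximal implicative filter, and it satisfies $x\subseteq y\subseteq M$ and $a\in M$. Here $M$ is in $\mathrm{X}(A)$ by the stated inclusion $\mathrm{Max}(A)\subseteq\mathrm{X}(A)$.
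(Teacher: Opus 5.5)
Your proof is correct. The paper gives no proof of this lemma and simply quotes it from Diego and Celani; your argument is the standard one from those sources. The deduction theorem $\mathrm{Fig}(x\cup\{a\})=\{b\colon a\to b\in x\}$ shows this filter is proper exactly when $\neg a\notin x$, Theorem~\ref{separacion} with the ideal $\{0\}$ then gives (2), and Zorn's lemma on proper filters, where properness is the finitary condition $0\notin F$, gives (3). The one loose end in your direct computation is repeated occurrences of $a$ among the generators. This is absorbed by $a\to(a\to b)=a\to b$, or simply by invoking the deduction theorem as you propose.
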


An element $a \in A$ is called dense if $\neg a=0$. Denote by $\mathrm{De}(A)$ the set of all dense elements of $A$, i.e., $\mathrm{De}(A) = \{ a \in A \colon \neg a=0 \}$. 

\begin{lemma} \cite{C1} \label{lema2}
Let ${\bf{A}} \in \mathrm{Hil}_{0}^{\vee}$. Let $M \in \mathrm{Fi}(A)$. Then the following conditions are equivalent:
\begin{enumerate}
\item $M$ is maximal,
\item For every $a \in A$, $a \notin M$ implies that $\neg a \in M$,
\item For every $a \in A$, $a \notin M$ implies that $\neg \neg a \notin M$,
\item $M$ is irreducible and $\mathrm{De}(A) \subseteq M$.
\end{enumerate}
\end{lemma}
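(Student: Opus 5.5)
We prove the equivalences in the cycle (1)$\Rightarrow$(2)$\Rightarrow$(3)$\Rightarrow$(1), and then prove (1)$\Leftrightarrow$(4) separately. Throughout, $M$ is a proper filter; we take this as part of maximality, since $A$ itself trivially satisfies (2).

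\emph{(1)$\Rightarrow$(2).} Let $a\notin M$. Consider $F=\mathrm{Fig}(M\cup\{a\})$. Since $M\subsetneq F$, maximality forces $F=A$, so $0\in F$. By the description of $\mathrm{Fig}$ there are $m_1,\dots,m_n\in M$ with $(m_1,\dots,m_n,a;0)=1$; the order of the premises can be rearranged using the exchange law $x\to(y\to z)=y\to(x\to z)$ valid in Hilbert algebras. This means $(m_1,\dots,m_n;a\to 0)=1$. Because $M$ is a filter and every $m_i\in M$, applying modus ponens $n$ times gives $\neg a=a\to0\in M$.

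\emph{(2)$\Rightarrow$(3).} Let $a\notin M$. By (2), $\neg a\in M$. If also $\neg\neg a\in M$, then from $\neg a\in M$ and $\neg a\to 0\in M$ we get $0\in M$, contradicting properness. Hence $\neg\neg a\notin M$.

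\emph{(3)$\Rightarrow$(1).} Suppose $M\subsetneq F$ with $F$ a filter, and pick $a\in F\setminus M$. By (3), $\neg\neg a\notin M$. We want to show $F=A$. Taking the contrapositive of (3), every $b\in M$ satisfies $\neg\neg b\in M$. The key step is to show that $\neg a\in M$, which gives $\neg a\in F$ together with $a\in F$, hence $0\in F$ and $F=A$. Suppose instead $\neg a\notin M$. Then (3) gives $\neg\neg\neg a=\neg a\notin M$ (Lemma \ref{lemma_pri}(3)), which is consistent and yields no contradiction, so this route alone is insufficient. This is the main obstacle. To get past it, we first establish (3)$\Rightarrow$(2): if $a\notin M$ and $\neg a\notin M$, then $\neg\neg a\notin M$ and $\neg a\notin M$. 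Now $\neg a\to\neg a=1\in M$, but this does not give a contradiction directly either.

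The better route to (1) is through (4). From (3), $M$ contains $\mathrm{De}(A)$: if $d$ is dense and $d\notin M$, then $\neg\neg d=\neg 0=1\notin M$, which is absurd. It remains to show that $M$ is irreducible. If $a,b\notin M$, then $\neg\neg a,\neg\neg b\notin M$, and we need some $c\notin M$ with $a,b\le c$. The natural candidate is $c=a\vee b$. If $a\vee b\in M$, then $\neg\neg(a\vee b)\in M$, and since $M$ is upward closed this already places it in $M$, so again no contradiction arises without an extra argument. For this reason we instead organize the proof as (1)$\Leftrightarrow$(4) and (4)$\Rightarrow$(2), using Lemma \ref{lema1}.

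\emph{(1)$\Rightarrow$(4).} We have $\mathrm{Max}(A)\subseteq\mathrm{X}(A)$. If $d$ is dense and $d\notin M$, then (2) gives $0=\neg d\in M$, which is absurd.

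\emph{(4)$\Rightarrow$(2).} Let $a\notin M$ and suppose $\neg a\notin M$. By Lemma \ref{lema1}(3) there is a maximal filter $N\supseteq M$ with $a\in N$. Then, with $M$ prime and $\mathrm{De}(A)\subseteq M$, the plan is to show that $a\vee\neg a$ is dense. Indeed, $\neg(a\vee\neg a)\le\neg a\wedge\neg\neg a$ in the sense that $\neg(a\vee\neg a)\le\neg a$ and $\neg(a\vee\neg a)\le\neg\neg a$, which forces it to be $\le 0$, using $\neg\neg a\to(\neg a\to 0)=1$. So $a\vee\neg a\in M$, and primeness gives $a\in M$ or $\neg a\in M$, a contradiction. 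This yields (2), and together with the cycle this also closes the gap in (3)$\Rightarrow$(1), since (4)$\Rightarrow$(2)$\Rightarrow$(1) via the argument above.

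The loop is completed by showing (3)$\Rightarrow$(4) using the density argument. The irreducibility part of (3)$\Rightarrow$(4) is the step I expect to be hardest. For it I would try showing $\neg\neg(a\vee b)\le \neg\neg a\vee\neg\neg b$ fails in general, and instead argue that (3) implies (2) directly. Suppose $a\notin M$ and $\neg a\notin M$. Then, by Lemma \ref{lemma_pri}(3), $\neg\neg(\neg a)=\neg a\notin M$, which is consistent. So one should use $a\vee\neg a$: its double negation is $1$, so it lies in $M$ by the contrapositive of (3), and then one needs primeness again.
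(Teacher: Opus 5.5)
Your arguments for (1)$\Rightarrow$(2), (2)$\Rightarrow$(3), (1)$\Rightarrow$(4) and (4)$\Rightarrow$(2) are correct. The last one rests on the good observation that $a\vee\neg a$ is always dense, because $\neg(a\vee\neg a)$ lies below both $\neg a$ and $\neg\neg a=\neg a\to 0$. Also, (2)$\Rightarrow$(1) is exactly your ``key step'': for $a\in F\setminus M$, (2) puts $\neg a\in M\subseteq F$, so $0\in F$. (The maximal filter $N$ you obtain from Lemma \ref{lema1}(3) is never used and should be dropped.)

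This gives (1)$\Leftrightarrow$(2)$\Leftrightarrow$(4) and (2)$\Rightarrow$(3), but no implication starting from (3). Your sentence claiming that (4)$\Rightarrow$(2)$\Rightarrow$(1) ``closes the gap in (3)$\Rightarrow$(1)'' is a non sequitur, since that chain starts at (4), not at (3). Both of your attempts, at (3)$\Rightarrow$(2) and at the irreducibility half of (3)$\Rightarrow$(4), are left unfinished.

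That gap cannot be filled, because (3) does not imply (1) for a proper filter. Take the four-element Boolean algebra $\{0,p,\neg p,1\}$ with $x\to y=\neg x\vee y$, which is even a Stonean Hilbert algebra, and let $M=\{1\}$. Since $\neg\neg x=x$ for every $x$, condition (3) holds trivially. But $M\subsetneq\{p,1\}$, so $M$ is not maximal; (2) fails at $p$; and $M=\{p,1\}\cap\{\neg p,1\}$ is not irreducible.

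The paper gives no proof of this lemma (it is quoted from the literature) and only uses the directions (1)$\Rightarrow$(2), (2)$\Rightarrow$(1) and (1)$\Rightarrow$(3), so nothing downstream breaks. Still, the correct formulation needs $M$ to be irreducible in (3), for instance as a standing hypothesis $M\in\mathrm{X}(A)$. Under that hypothesis your closing idea finishes the job. We have $\neg\neg(a\vee\neg a)=\neg 0=1\in M$, so (3) gives $a\vee\neg a\in M$. Primeness then yields $a\in M$ or $\neg a\in M$, which is (2).
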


\begin{definition}
Let ${\bf{A}}, {\bf{B}} \in \mathrm{Hil}_{0}^{\vee}$. A mapping $h \colon A \to B$ is a $\vee0$-semi-homomorphism if the following conditions are satisfied for every $a,b \in A$:
\begin{enumerate}
\item $h(1) = 1$, 
\item $h(0) = 0$,
\item $h(a \to b) \leq h(a) \to h(b)$, 
\item $h(a \vee b) = h(a) \vee h(b)$.
\end{enumerate} 
A $\vee0$-homomorphism is a $\vee0$-semi-homomorphism $h$ such that $h(a) \to h(b) \leq h(a \to b)$, for all $a,b \in A$.
\end{definition}

We consider the following two algebraic categories: 
\[
\begin{array}{llllllll}
{\mathcal{SH}}_{0}^{\vee} & = & \text{$H_{0}^{\vee}$-algebras} & + & \text{$\vee0$-semi-homomorphisms}, \\
{\mathcal{HH}}_{0}^{\vee} & = & \text{$H_{0}^{\vee}$-algebras} & + & \text{$\vee0$-homomorphisms}. \\
\end{array}
\]

\subsection{$H_{0}^{\vee}$-spaces}

In \cite{CCM,CM1} a full duality for the class of Hilbert algebras and the class of Hilbert algebras with supremum was developed. Recently in \cite{CM4}, the authors simplifies the duality given in \cite{CM1} where the morphisms are two special order-preserving maps. In this subsection we recall the duality for Hilbert algebras with supremum.

Let $\langle X, \mathcal{T}_{\mathcal{K}} \rangle$ be a topological space with a base $\mathcal{K}$ and let $D(X) = \{U \colon U^{c} \in \mathcal{K}\}$. 

\begin{definition} \cite{CM1}
An $H_{0}^{\vee}$-space is a topological space $\langle X, \mathcal{T}_{\mathcal{K}} \rangle$ such that:
\begin{enumerate}
\item[(H1)] $\mathcal{K}$ is a base of open and compact subsets for the topology $\mathcal{T}_{\mathcal{K}}$,
\item[(H2)] $U \cap V \in\mathcal{K}$ and $U \Rightarrow V := (U \cap V^{c}]^c  \in D(X)$, for all $U,V \in \mathcal{K}$,
\item[(H3)] $X \in \mathcal{K}$,
\item[(H4)] $\langle X, \mathcal{T}_{\mathcal{K}} \rangle$ is sober.
\end{enumerate}
\end{definition}

Let $\langle X, \leq \rangle$ be a poset. We denote by $\mathrm{Up}(X)$ the family of all upsets of $X$. We known that $\mathrm{H}(X) = \langle \mathrm{Up}(X), \Rightarrow, \cup ,\emptyset, X \rangle$ is a bounded Hilbert algebra with supremum, called the complex Hilbert algebra of $\langle X, \leq \rangle$.
If ${\bf{A}} \in \mathrm{Hil}_{0}^{\vee}$, then ${\bf{A}}$ is isomorphic to the subalgebra $D(\mathrm{X}(A)) := \{ \varphi(a) \colon a \in A \}$ of $\mathrm{H}(\mathrm{X}(A))$ via the map $\varphi \colon A \to \mathrm{Up}(\mathrm{X}(A))$ defined by $\varphi(a):= \{ x \in \mathrm{X}(A) \colon a \in x \}$. Taking into account \cite{CM1}, we conclude that $\langle \mathrm{X}(A), \mathcal{T}_{\mathcal{K}_{A}} \rangle$ is an $H_{0}^{\vee}$-space, which we call the dual space of ${\bf{A}}$, where the family $\mathcal{K}_{A} := \{  \varphi(a)^{c} \colon a \in A \}$ forms a basis of compact subsets.

\begin{remark} \label{obs varphi(neg a)}
Let ${\bf{A}} \in \mathrm{Hil}_{0}^{\vee}$ and let $\langle \mathrm{X}(A),\mathcal{T}_{\mathcal{K}_{A}} \rangle$ be the dual space of ${\bf{A}}$. For $a \in A$, note that 
\[
\begin{array}{llllllll}
\varphi (a \to 0) &=& \varphi(a) \Rightarrow \varphi(0) &=& \varphi(a) \Rightarrow \emptyset \\
                         &=& (\varphi(a) \cap \emptyset^c]^c &=& (\varphi(a)]^c,
\end{array}
\]
i.e., $\varphi( \neg a) = (\varphi(a)]^c$.
\end{remark}

Furthermore, if $\langle X, \mathcal{T}_{\mathcal{K}} \rangle $ is an $H_{0}^{\vee}$-space, then $\langle D(X), \cup, \Rightarrow, \emptyset, X \rangle \in \mathrm{Hil}_{0}^{\vee}$ and $\langle X, \mathcal{T}_{\mathcal{K}} \rangle$ is homeomorphic to the $H_{0}^{\vee}$-space $\langle \mathrm{X}(D(X)), \mathcal{T}_{\mathcal{K}_{D(X)}} \rangle$ via the map $\epsilon \colon X \to \mathrm{X}(D(X))$ given by $\epsilon (x):= \{  U \in D(X) \colon x \in U \}$.

Given two $H_{0}^{\vee}$-spaces $\langle X_{1}, \mathcal{T}_{\mathcal{K}_{1}} \rangle$ and $\langle X_{2}, \mathcal{T}_{\mathcal{K}_{2}} \rangle$ we can consider two special order-preserving maps $f \colon X_{1} \to X_{2}$:
\begin{itemize}
\item We say that $f$ is a spectral map if $f^{-1}(U) \in \mathcal{K}_{1}$, for all $U \in \mathcal{K}_{2}$.
\item We say that $f$ is a p-morphism if is a spectral map such that $[f(x)) = f([x))$, for all $x \in X$.
\end{itemize}
Then we have two categories: 
\[
\begin{array}{llllllll}
{\mathcal{SMS}}_{0}^{\vee} & = & \text{$H_{0}^{\vee}$-spaces} & + & \text{spectral maps}, \\
{\mathcal{PMS}}_{0}^{\vee} & = & \text{$H_{0}^{\vee}$-spaces} & + & \text{p-morphisms}. \\
\end{array}
\]
Then by the results given in \cite{CM4} the category ${\mathcal{SH}}_{0}^{\vee}$ is dually equivalent to the category ${\mathcal{SMS}}_{0}^{\vee}$, and the category ${\mathcal{HH}}_{0}^{\vee}$ is dually equivalent to the category ${\mathcal{PMS}}_{0}^{\vee}$.

\section{Stonean Hilbert algebras} \label{sec3}

\begin{definition} \cite{C1,Saeid1}
Let ${\bf{A}} \in \mathrm{Hil}_{0}^{\vee}$. We say that ${\bf{A}}$ is a Stonean Hilbert algebra if ${\bf{A}}$ satisfies the identity $\neg a \vee \neg \neg a = 1$.
\end{definition}

We denote by $\mathrm{SHil}_{0}^{\vee}$ the class of Stonean Hilbert algebras.

\begin{example} \label{exam1}
Let $\langle A, \vee, 0, 1 \rangle$ be a bounded join-semilattice. Consider on $A$ the implication $\to$ given by the order 
\[
a \to b  = 
\left\{
\begin{array} {llllll}
1 & \text{if} & a \leq b, \\
b & \text{if} & a \nleq b. \\
\end{array}
\right.
\]
It is straightforward to see that the structure $\langle A, \to, \vee, 0, 1 \rangle$ is a Stonean Hilbert algebra.
\end{example}

\begin{example} \label{ejemplolocal}
A Hilbert algebra is said to be local if it has exactly one maximal implicative filter. So, a bounded Hilbert algebra ${\bf{A}}$ is local if and only if $\mathrm{De}(A) = A - \{0\}$. It follows that every local bounded Hilbert algebra with supremum is a Stonean Hilbert algebra (see \cite{Saeid1,Saeid2}).  
\end{example}

\begin{example}
A Hilbert algebra is said to be prelinear if satisfies the identity $((a \to b) \to c) \to (((b \to a) \to c) \to c) = 1$. In particular, if ${\bf{A}} \in \mathrm{Hil}_{0}^{\vee}$, then ${\bf{A}}$ is prelinear if and only if $(a \to b) \vee (b \to a) = 1$, for all $a,b \in A$ (see \cite{Diego,CCSM}). Then, by Lemma \ref{lemma_pri}, it follows that 
\[
1 = (\neg \neg a \to \neg a) \vee (\neg a \to \neg \neg a) = \neg a \vee \neg \neg a,
\] 
for all $a \in A$. Therefore, every bounded prelinear Hilbert algebra with supremum is a Stonean Hilbert algebra. It is easy to see that there exist Stonean Hilbert algebras which are not prelinear ones (see Remark \ref{rem_negmorf}).
\end{example}

The following result gives different characterizations of Stonean Hilbert algebras.

\begin{lemma} \cite{C1} \label{lemaSergio}
Let ${\bf{A}} \in \mathrm{Hil}_{0}^{\vee}$. Then the following conditions are equivalent:
\begin{enumerate}
\item ${\bf{A}} \in \mathrm{SHil}_{0}^{\vee}$, 
\item Each irreducible implicative filter is contained in a unique maximal implicative filter, 
\item For any increasing subsets $U,V \subseteq \mathrm{X}(A)$, we have $(U] \cap (V] = (U \cap V]$,
\item $a \to \neg b = \neg a \vee \neg b$, for all $a,b \in A$.
\end{enumerate}
\end{lemma}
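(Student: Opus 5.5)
I will prove the cycle (1)$\Rightarrow$(2)$\Rightarrow$(3)$\Rightarrow$(1), and handle (4) separately through (1)$\Leftrightarrow$(4). Throughout I use the representation $\varphi$ and Remark \ref{obs varphi(neg a)}, namely $\varphi(\neg a)=(\varphi(a)]^c$. I also use Lemma \ref{lema1}(3): $\neg a\notin x$ if and only if some maximal $M\supseteq x$ contains $a$.

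\textbf{(1)$\Rightarrow$(2).} Existence of a maximal filter above $x$ is standard (Zorn, using Lemma \ref{lema2}), so only uniqueness needs work. Suppose $x\subseteq M_1,M_2$ with $M_1\neq M_2$ maximal, and pick $a\in M_1\setminus M_2$. By Lemma \ref{lema2}, $\neg a\in M_2$. Since $1=\neg a\vee\neg\neg a\in x$ and $x$ is prime, either $\neg a\in x\subseteq M_1$ or $\neg\neg a\in x\subseteq M_2$. The first contradicts $a\in M_1$ by Lemma \ref{lema1}(1). The second contradicts $\neg a\in M_2$ by Lemma \ref{lema1}(1) applied to $\neg a$.

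\textbf{(2)$\Rightarrow$(3).} The inclusion $(U\cap V]\subseteq(U]\cap(V]$ is trivial. For the converse, take $x\le u\in U$ and $x\le v\in V$. Let $M$ be the unique maximal filter above $x$. The maximal filter above $u$ also lies above $x$, so it equals $M$; likewise for $v$. Since $U,V$ are upsets, $M\in U\cap V$, and hence $x\in(U\cap V]$.

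\textbf{(3)$\Rightarrow$(1).} Suppose $\neg a\vee\neg\neg a\neq 1$. The implicative filter $\{1\}$ is disjoint from the ideal generated by $\neg a\vee\neg\neg a$, so Theorem \ref{separacion} gives $x\in \mathrm{X}(A)$ with $\neg a\notin x$ and $\neg\neg a\notin x$. By Remark \ref{obs varphi(neg a)}, $x\in(\varphi(a)]$ and $x\in(\varphi(\neg a)]$. By (3), $x\in(\varphi(a)\cap\varphi(\neg a)]$. This is impossible, because no irreducible filter contains both $a$ and $\neg a$ (Lemma \ref{lema1}(1)).

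\textbf{(1)$\Leftrightarrow$(4).} For (4)$\Rightarrow$(1), take $b=a$. Then $\neg a\vee\neg a=a\to\neg a=\neg a$ by Lemma \ref{lemma_pri}(7), which gives nothing. Instead take $a:=\neg a$ and $b:=a$: we get $\neg a\to\neg a=1=\neg\neg a\vee\neg a$, which is exactly Stone's identity.

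For (1)$\Rightarrow$(4), the inequality $\neg a\vee\neg b\le a\to\neg b$ holds in any $H^{\vee}_0$-algebra, because $\neg b\le a\to\neg b$ and $\neg a\le b\to\neg a=a\to\neg b$ by Lemma \ref{lemma_pri}(5).

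The reverse inequality is the main obstacle, and I argue it dually. Suppose $a\to\neg b\not\le\neg a\vee\neg b$. Separate the filter generated by $a\to\neg b$ from the ideal $(\neg a\vee\neg b]$ with Theorem \ref{separacion}, obtaining $x\in \mathrm{X}(A)$ with $a\to\neg b\in x$, $\neg a\notin x$ and $\neg b\notin x$. By Lemma \ref{lema1}(3) there are maximal $M_1,M_2\supseteq x$ with $a\in M_1$ and $b\in M_2$. Using (1)$\Rightarrow$(2), $M_1=M_2=:M$, so $a,b\in M$. But $x\subseteq M$ gives $a\to\neg b\in M$, hence $\neg b\in M$ together with $b\in M$. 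This contradicts Lemma \ref{lema1}(1).
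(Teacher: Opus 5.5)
Your proof is correct. The paper gives no proof of this lemma; it imports it from \cite{C1}, so there is no in-paper argument to compare against.

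All four implications check out. In (1)$\Rightarrow$(2), primeness of $x$ applied to $1=\neg a\vee\neg\neg a$ gives the contradiction correctly in both cases. The (2)$\Rightarrow$(3) argument correctly uses the fact that the maximal filters above $u$ and above $v$ are also above $x$, so both coincide with the unique one. In (3)$\Rightarrow$(1) and (1)$\Rightarrow$(4), the separation steps are legitimate. The sets $\{1\}$ and $[a\to\neg b)$ are implicative filters, and they are disjoint from the principal ideals $(\neg a\vee\neg\neg a]$ and $(\neg a\vee\neg b]$ respectively, so Theorem~\ref{separacion} applies. Since filters are upsets, $\neg a,\neg b\notin x$ then follows. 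Invoking (1)$\Rightarrow$(2) inside (1)$\Rightarrow$(4) is also fine, since it has already been established at that point.

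Your method combines prime-filter separation with passage to maximal filters via Lemmas~\ref{lema1} and~\ref{lema2}. This is the same toolkit the paper uses in its own arguments, for instance in Theorems~\ref{charact dual space} and~\ref{charact_neg_morfismo}, so it fits naturally with the rest of the text.

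Two cosmetic points:
\begin{enumerate}
\item In (1)$\Rightarrow$(2), Zorn's lemma is unnecessary. Existence of a maximal filter above $x$ follows at once from Lemma~\ref{lema1}(3) with $a=1$, since $\neg 1=0\notin x$.
\item In (4)$\Rightarrow$(1), the false start with $b=a$ should be deleted from a final write-up.
\end{enumerate}
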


\begin{remark} 
If ${\bf{A}} \in \mathrm{SHil}_{0}^{\vee}$, then $[a) \cap [\neg a) \subseteq \mathrm{De}(A)$, for all $a \in A$. Indeed, let $b\in A$ such that $b \in [a) \cap [\neg a)$, then $a\leq b$ and $\neg a \leq b$. Thus, $\neg \neg a \leq \neg \neg b$ and $\neg \neg \neg a = \neg a \leq \neg \neg b$. So, $\neg a \vee \neg \neg a  \leq \neg \neg b$ and as $\mathbf{A}$ is a Stonean Hilbert algebra, $\neg \neg b = 1$ and consequently $\neg b = 0$. Therefore, $b \in \mathrm{De}(A)$.
\end{remark}

In \cite{BG}, it was proved that if ${\bf{A}}$ is a Hilbert algebra and $a,b \in A$ are such that $a \vee b$ exists, then, for every $c \in A$, the meet $(a \to c) \wedge (b \to c)$ exists and $(a \to c) \wedge (b \to c) = (a \vee b) \to c$. Thus, if ${\bf{A}} \in \mathrm{Hil}_{0}^{\vee}$ we have that there exists $(a \to c) \wedge (b \to c)$ and 
\[
(a \to c) \wedge (b \to c) = (a \vee b) \to c,
\]
for all $a,b,c \in A$ (see \cite{BG}, Corollary 4). In particular, if $c=0$ we have $\neg(a \vee b) = \neg a \wedge \neg b$ and therefore there exists $\neg a \wedge \neg b$, for all $a,b \in A$.

Let ${\bf{A}} \in \mathrm{Hil}_{0}^{\vee}$. Consider the equation 
\begin{equation} \label{SP} 
\neg a \to (\neg b \to (\neg a \wedge \neg b)) = 1.
\end{equation}
From the preceding observations, we know that $\neg a \wedge \neg b$ always exists. Hence, equation (\ref{SP}) is well-defined in ${\bf{A}}$. Our next goal is to prove that (\ref{SP}) is satisfied in every Stonean Hilbert algebra. We proceed to show that this condition holds by means of its regular elements. Recall that the set of regular elements of ${\bf{A}}$ is 
\[
R(A) = \{ \neg a \colon a \in A \} = \{a \in A \colon \neg \neg a = a\}.
\]
Then, the structure $R({\bf{A}}) = \langle R(A),  \veebar, \barwedge, \neg, 0, 1 \rangle$ is a Boolean algebra (see \cite{GT}), where the lattice operations $\veebar$ and $\barwedge$ are defined by 
\begin{eqnarray*}
a \barwedge b &:=& \neg (a \to \neg b), \\
a \veebar b &:=& \neg \neg (\neg a \to b).
\end{eqnarray*}
Thus, $a \to_{R(A)} b = \neg a \veebar b$. It is well known that condition (\ref{SP}) holds in every Boolean algebra. In particular, in $R({\bf{A}})$ we have 
\[
\neg a \to_{R(A)} ( \neg b \to_{R(A)} (\neg a \barwedge \neg b)) = 1.
\]

\begin{remark} \label{remark1}
Let ${\bf{A}}$ be a bounded Hilbert algebra. In \cite{BG} it is proved that $a \to_{R(A)} b = a \to b$, for all $a,b \in R(A)$.
\end{remark}

\begin{lemma} \label{lemainfimo}
Let ${\bf{A}} \in \mathrm{SHil}_{0}^{\vee}$. Then $a \barwedge b = a \wedge b$, for all $a,b \in R(A)$.
\end{lemma}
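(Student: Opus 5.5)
Write $a=\neg c$ and $b=\neg d$ with $a,b\in R(A)$. By definition $a\barwedge b=\neg(a\to\neg b)$. Lemma \ref{lemaSergio}(4) gives $a\to\neg b=\neg a\vee\neg b$. The observation following \cite{BG} gives $\neg(u\vee v)=\neg u\wedge\neg v$, and this meet always exists. Putting these together,
\[
a\barwedge b=\neg(a\to\neg b)=\neg(\neg a\vee\neg b)=\neg\neg a\wedge\neg\neg b=a\wedge b,
\]
where the last step uses $\neg\neg a=a$ and $\neg\neg b=b$ because $a,b$ are regular. So the plan is this chain of equalities, with a check at each link.

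In order, the key steps are as follows.
\begin{enumerate}
\item Show that $\neg a\vee\neg b=a\to\neg b$. Apply Lemma \ref{lemaSergio}(4) with $a$ and $b$ taken to be the given regular elements.
\item Apply the identity $(u\vee v)\to 0=(u\to0)\wedge(v\to0)$ with $u=\neg a$ and $v=\neg b$. This gives $\neg(\neg a\vee\neg b)=\neg\neg a\wedge\neg\neg b$, and also shows that this meet exists in $A$.
\item Use regularity to replace $\neg\neg a$ by $a$ and $\neg\neg b$ by $b$.
\end{enumerate}
The meet here is taken in $A$. It lies in $R(A)$, since it equals $\neg(\cdot)$. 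It is therefore also the meet computed in $R(A)$, which matches the Boolean meet $\barwedge$.

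The only real point of care is that Lemma \ref{lemaSergio}(4) is where the Stonean hypothesis enters. Without it, $a\to\neg b$ need not be a join of negations, and the infimum $a\wedge b$ of two regular elements need not be $\neg(a\to\neg b)$.

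If one prefers not to rely on Lemma \ref{lemaSergio}(4), a fallback is to verify the two inequalities directly.
\begin{itemize}
\item $\neg(a\to\neg b)\le a$ and $\neg(a\to\neg b)\le b$. These follow from Lemma \ref{lemma_pri}: $\neg a\le a\to\neg b$, and also $\neg b\le a\to\neg b$ and $a\to\neg b=b\to\neg a$; taking negations and using regularity finishes it.
\item Any common lower bound $c$ of $a$ and $b$ lies below $\neg(a\to\neg b)$. For this one needs $c\le\neg(\neg a\vee\neg b)$, i.e. $c\le\neg\neg a$ and $c\le\neg\neg b$; again Stoneness is used to identify $a\to\neg b$ with $\neg a\vee\neg b$.
\end{itemize}
I expect this second inequality to be the only nontrivial step.
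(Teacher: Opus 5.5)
Your chain $\neg(a\to\neg b)=\neg(\neg a\vee\neg b)=\neg\neg a\wedge\neg\neg b=a\wedge b$ is correct and is exactly the paper's proof: it uses Lemma \ref{lemaSergio}(4), then the identity $\neg(u\vee v)=\neg u\wedge\neg v$, then regularity. Your side remark that without Stoneness the meet of two regular elements need not be $\neg(a\to\neg b)$ is wrong, though it does not affect your proof: for regular $a,b$ and any common lower bound $c$ one has $a\to\neg b\le c\to\neg b\le c\to\neg c=\neg c$, hence $c\le\neg\neg c\le\neg(a\to\neg b)$ in every bounded Hilbert algebra, so Stoneness enters only through the route via Lemma \ref{lemaSergio}(4).
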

\begin{proof}
Let $a,b \in R(A)$. Then $\neg \neg a = a$ and $\neg \neg b = b$ and by Lemma \ref{lemaSergio} we have 
\[
a \barwedge b = \neg (a \to \neg b) = \neg (\neg a \vee \neg b) = \neg \neg a \wedge \neg \neg b = a \wedge b.
\qedhere
\]
\end{proof}

\begin{theorem} \label{T1}
Let ${\bf{A}} \in \mathrm{SHil}_{0}^{\vee}$. Then (\ref{SP}) is satisfied in ${\bf{A}}$.
\end{theorem}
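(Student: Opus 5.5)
The plan is to transfer the Boolean identity in $R({\bf{A}})$ back to ${\bf{A}}$, using two facts proved above. Remark \ref{remark1} says that the implication of $R({\bf{A}})$ agrees with that of ${\bf{A}}$ on regular elements. Lemma \ref{lemainfimo} says that the Boolean meet $\barwedge$ agrees with the meet in ${\bf{A}}$ on regular elements when ${\bf{A}}$ is Stonean.

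Fix $a,b \in A$. First I note that $\neg a$, $\neg b \in R(A)$, since $\neg\neg\neg a = \neg a$ by Lemma \ref{lemma_pri}. Next, $\neg a \wedge \neg b = \neg(a \vee b)$ by the remark following Lemma \ref{lemaSergio}, so $\neg a \wedge \neg b$ also lies in $R(A)$. By Lemma \ref{lemainfimo} we then get $\neg a \barwedge \neg b = \neg a \wedge \neg b$.

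Now I start from the identity valid in the Boolean algebra $R({\bf{A}})$:
\[
\neg a \to_{R(A)} ( \neg b \to_{R(A)} (\neg a \barwedge \neg b)) = 1 .
\]
I replace $\neg a \barwedge \neg b$ by $\neg a \wedge \neg b$. The inner expression $\neg b \to_{R(A)} (\neg a \wedge \neg b)$ has both arguments in $R(A)$, so by Remark \ref{remark1} it equals $\neg b \to (\neg a \wedge \neg b)$, and this element is therefore regular. Applying Remark \ref{remark1} once more to the outer implication, whose arguments $\neg a$ and $\neg b \to (\neg a \wedge \neg b)$ are both regular, gives
\[
\neg a \to (\neg b \to (\neg a \wedge \neg b)) = 1
\]
in ${\bf{A}}$. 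This is (\ref{SP}).

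The only delicate point is checking at each stage that the arguments really are regular, so that Remark \ref{remark1} applies. It also helps to observe that $R(A)$ is closed under $\to$ in ${\bf{A}}$: indeed $c \to \neg d = \neg(\ldots)$-type identities, or simply Remark \ref{remark1} itself, which places $c \to_{R(A)} d$ inside $R(A)$. With these checks made, the two identifications go through directly.
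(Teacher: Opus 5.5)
Your proposal is correct and is essentially the paper's proof. Both transfer the Boolean identity $\neg a \to_{R(A)} (\neg b \to_{R(A)} (\neg a \barwedge \neg b)) = 1$ to ${\bf{A}}$ using Remark \ref{remark1} and Lemma \ref{lemainfimo}; your regularity checks, such as $\neg a \wedge \neg b = \neg(a \vee b) \in R(A)$ and the inner implication being regular, just make explicit what the paper's one-line computation leaves implicit.
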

\begin{proof}
It is a direct consequence of Remark \ref{remark1} and Lemma \ref{lemainfimo}. Indeed, if $a,b \in A$, then $\neg a, \neg b \in R(A)$ and
\[
\neg a \to ( \neg b \to (\neg a \wedge \neg b)) = \neg a \to_{R(A)} ( \neg b \to_{R(A)} (\neg a \barwedge \neg b)) = 1.
\qedhere  
\]
\end{proof}

The following theorem was proved in \cite{BG} for the class of Hilbert algebras with infimum. However, it can be extrapolated to the class of bounded Hilbert algebras with supremum as a special case. We give the proof so that the paper is self-contained.

\begin{lemma} \label{lemaux}
Let $\mathbf{A} \in \mathrm{Hil}_{0}^{\vee}$. Then,
\[
\neg a \to (\neg b \to c) \leq (\neg a \wedge \neg b) \to c,
\] 
for all  $a,b,c\in A.$
\end{lemma}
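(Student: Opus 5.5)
Write $d := \neg a \wedge \neg b$, which exists by the observation from \cite{BG} quoted above, namely $d = \neg(a \vee b)$. The inequality to prove is equivalent to
\[
d \leq (\neg a \to (\neg b \to c)) \to c,
\]
because in any Hilbert algebra $x \leq y \to z$ holds iff $y \leq x \to z$, by the exchange law $x \to (y \to z) = y \to (x \to z)$. So the plan is to establish this displayed inequality and then apply exchange once to return to the form in the statement.

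To get the displayed inequality, I will use only the basic facts that $x \to$ is order preserving in its second argument and order reversing in its first, together with $x \to (x \to z) = x \to z$ and $x \leq y \to x$.

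\begin{enumerate}
\item Since $d \leq \neg a$, antitonicity in the first argument gives $\neg a \to (\neg b \to c) \leq d \to (\neg b \to c)$.
\item Likewise, since $d \leq \neg b$, we get $\neg b \to c \leq d \to c$. Monotonicity in the second argument then yields $d \to (\neg b \to c) \leq d \to (d \to c) = d \to c$.
\item Combining the two steps, $\neg a \to (\neg b \to c) \leq d \to c$.
\end{enumerate}

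This last inequality is already the statement of the lemma, since $d = \neg a \wedge \neg b$. So the detour through the exchange law is not even needed.

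The only point requiring care is the existence of the meet $\neg a \wedge \neg b$, which the preceding paragraph of the paper supplies. After that, everything is routine Hilbert-algebra identities: exchange, the contraction law $x \to (x \to y) = x \to y$, and the monotonicity properties of $\to$. I expect no real obstacle; the mildest step is justifying the contraction law, which follows from axiom (2) of Hilbert algebras.
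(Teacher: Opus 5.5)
Your proof is correct and follows essentially the same route as the paper. The paper applies exchange (the detour you mention) and then self-distributivity to show $(\neg a \to (\neg b \to c)) \to ((\neg a \wedge \neg b) \to c) = 1$, using $(\neg a \wedge \neg b) \to \neg a = 1$ and $(\neg a \wedge \neg b) \to \neg b = 1$ to discharge the two hypotheses; your antitonicity step and your monotonicity-plus-contraction step do exactly the same work, packaged more compactly as a chain of inequalities.
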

\begin{proof}
We have that
\[
\begin{array}{llll}
(\neg a \to (\neg b \to c)) \to ((\neg a \wedge \neg b) \to c) = \\
 = (\neg a \wedge \neg b) \to ((\neg a \to (\neg b \to c)) \to c) \\
 = [(\neg a \wedge \neg b) \to (\neg a \to (\neg b \to c))] \to [(\neg a \wedge \neg b) \to c] \\
 = [((\neg a \wedge \neg b) \to \neg a ) \to ((\neg a \wedge \neg b) \to (\neg b \to c)) ] \to [(\neg a \wedge \neg b) \to c]   \\
 = [1 \to ((\neg a \wedge \neg b) \to (\neg b \to c)) ] \to [(\neg a \wedge \neg b) \to c] \\
 = [(\neg a \wedge \neg b) \to (\neg b \to c) ] \to [(\neg a \wedge \neg b) \to c] \\
 = [((\neg a \wedge \neg b) \to \neg b) \to ((\neg a \wedge \neg b) \to c)] \to [(\neg a \wedge \neg b) \to c] \\
 = [1 \to ((\neg a \wedge \neg b) \to c)] \to [(\neg a \wedge \neg b) \to c] \\
 = [(\neg a \wedge \neg b) \to c] \to [(\neg a \wedge \neg b) \to c] \\
 = 1. 
\end{array}
\]
Thus,  $\neg a \to (\neg b \to c) \leq (\neg a \wedge \neg b) \to c$.
\end{proof}

\begin{theorem} \label{T2}
Let ${\bf{A}} \in \mathrm{Hil}_{0}^{\vee}$. Then the following conditions are equivalent:
\begin{enumerate}
\item ${\bf{A}}$ satisfies (1),
\item $(\neg a \wedge \neg b) \to c = \neg a \to (\neg b \to c)$,  for all $a,b,c \in A$.
\end{enumerate}
\end{theorem}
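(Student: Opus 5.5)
The plan is to prove both implications directly from the Hilbert algebra axioms, using Lemma \ref{lemaux} for one of the two inequalities. Here (1) refers to equation (\ref{SP}). Throughout, $\neg a \wedge \neg b$ exists and equals $\neg(a \vee b)$, by the result of \cite{BG} recalled above.

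For (1) $\Rightarrow$ (2), Lemma \ref{lemaux} already gives $\neg a \to (\neg b \to c) \leq (\neg a \wedge \neg b) \to c$, so only the reverse inequality needs proof. From (\ref{SP}) we get $\neg a \leq \neg b \to (\neg a \wedge \neg b)$. We also have $(\neg a \wedge \neg b) \to c \leq (\neg b \to (\neg a \wedge \neg b)) \to (\neg b \to c)$; this is an instance of the standard law $x \to y \leq (z \to x) \to (z \to y)$, which follows from axiom 2 together with axiom 1.

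Next, write $d = (\neg a \wedge \neg b) \to c$. We want $d \leq \neg a \to (\neg b \to c)$, or equivalently $\neg a \leq d \to (\neg b \to c)$ by the exchange law $x \to (y \to z) = y \to (x \to z)$. By monotonicity of $\to$ in the first argument (antitone), $\neg b \to (\neg a \wedge \neg b) \leq d \to (\neg b \to c)$ follows from the inequality just obtained, again by exchange. Combining this with $\neg a \leq \neg b \to (\neg a \wedge \neg b)$ gives the claim by transitivity. The only delicate point is keeping the exchange and monotonicity manipulations straight; all of them are standard Hilbert algebra facts (see \cite{Diego}).

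For (2) $\Rightarrow$ (1), take $c = \neg a \wedge \neg b$ in (2). Then $\neg a \to (\neg b \to (\neg a \wedge \neg b)) = (\neg a \wedge \neg b) \to (\neg a \wedge \neg b) = 1$, which is exactly (\ref{SP}).

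The main obstacle is therefore only the reverse inequality in (1) $\Rightarrow$ (2). I expect it to go through with the order-theoretic Hilbert algebra identities without any topological input.
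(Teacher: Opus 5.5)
Your proposal is correct and is essentially the paper's proof. Both use Lemma \ref{lemaux} for one inequality, derive the reverse inequality from (\ref{SP}) via exchange and self-distributivity, and take $c=\neg a\wedge\neg b$ for (2) $\Rightarrow$ (1); the paper writes the reverse inequality as a chain of equalities ending in $1$, while you phrase it as inequalities combined by transitivity. One cosmetic slip: passing from $d \leq (\neg b \to (\neg a\wedge\neg b)) \to (\neg b \to c)$ to $\neg b \to (\neg a\wedge\neg b) \leq d \to (\neg b \to c)$ uses exchange alone, not antitonicity of $\to$ in its first argument.
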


\begin{proof}
$(1) \Rightarrow (2)$ By Lemma \ref{lemaux}, it suffices to show that $(\neg a \wedge \neg b) \to c \leq \neg a \to (\neg b \to c)$.
\[
\begin{array}{llll}  
[(\neg a \wedge \neg b) \to c] \to [\neg a \to (\neg b \to c)]  =  \\ 
     =\neg a \to [((\neg a \wedge \neg b) \to c) \to (\neg b \to c)] \\
     = \neg a \to [\neg b \to (((\neg a \wedge \neg b) \to c) \to c)] \\ 
     = \neg a \to [(\neg b \to ((\neg a \wedge \neg b) \to c)) \to (\neg b \to c)] \\
     = \neg a \to [((\neg b \to (\neg a \wedge \neg b)) \to (\neg b \to c)) \to (\neg b \to c)] \\
     = [(\neg a \to (\neg b \to (\neg a \wedge \neg b))) \to (\neg a \to (\neg b \to c))] \to [\neg a \to (\neg b \to c)] \\
     = [1 \to (\neg a \to (\neg b \to c))] \to [\neg a \to (\neg b \to c)] \\
     = [\neg a \to (\neg b \to c)] \to [\neg a \to (\neg b \to c)] \\
     = 1.
\end{array}
\]
$(2) \Rightarrow (1)$ By assumption, 
\[ 
\neg a \to (\neg b \to (\neg a \wedge \neg b)) = (\neg a \wedge \neg b) \to (\neg a \wedge \neg b) = 1. 
\qedhere
\]
\end{proof}

According to Theorem \ref{T1}, the class of Stonean Hilbert algebras satisfies equation (\ref{SP}). Therefore, we proceed to prove the next proposition, which will be of use subsequently.

\begin{proposition} \label{generaliz lemma sergio}
Let ${\bf{A}} \in \mathrm{SHil}_{0}^{\vee}$ and let $n\in \mathbb{N}$. Then the following conditions are satisfied for every $a_1, \ldots, a_n \in A$:
\begin{enumerate}
\item $\neg \neg (a_1 \vee \ldots \vee a_n) =  \neg \neg a_1 \vee \ldots \vee \neg \neg a_n$,
\item $(a_1, \ldots, a_{n-1} ; \neg a_n) = \neg a_1 \vee \ldots \vee \neg a_{n-1} \vee \neg a_n$.
\end{enumerate}
\end{proposition}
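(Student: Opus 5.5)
Both items go by induction on $n$. The base cases come from Lemma \ref{lemaSergio}(4) together with Theorems \ref{T1} and \ref{T2}, and the inductive steps need only associativity of $\vee$ and the recursive definition of $(a_1,\ldots,a_{n-1};a)$.

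\emph{Item (1).} The case $n=1$ is trivial, so the real content is $n=2$. Recall that $\neg(a\vee b)=\neg a\wedge \neg b$ always holds in $\mathrm{Hil}_0^{\vee}$. Hence $\neg\neg(a\vee b)=(\neg a\wedge\neg b)\to 0$. By Theorem \ref{T1} the algebra satisfies (\ref{SP}), so Theorem \ref{T2} applies with $c=0$ and gives $(\neg a\wedge \neg b)\to 0=\neg a\to(\neg b\to 0)=\neg a\to \neg\neg b$. Now Lemma \ref{lemaSergio}(4), applied to the elements $\neg a$ and $\neg b$, yields $\neg a\to\neg(\neg b)=\neg\neg a\vee\neg\neg b$. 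This proves $\neg\neg(a\vee b)=\neg\neg a\vee\neg\neg b$. For the inductive step, write $a_1\vee\cdots\vee a_{n}=(a_1\vee\cdots\vee a_{n-1})\vee a_n$. Apply the $n=2$ case and then the induction hypothesis.

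\emph{Item (2).} For $n=1$ the expression reduces to $\neg a_1$, and for $n=2$ the claim $a_1\to\neg a_2=\neg a_1\vee\neg a_2$ is exactly Lemma \ref{lemaSergio}(4). For the inductive step, unfold the definition:
\[
(a_1,\ldots,a_n;\neg a_{n+1})=a_1\to(a_2,\ldots,a_n;\neg a_{n+1})=a_1\to(\neg a_2\vee\cdots\vee\neg a_{n+1}),
\]
where the second equality is the induction hypothesis. To apply Lemma \ref{lemaSergio}(4) once more, the right-hand join must itself be a negation. This is the main point of the step.

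To show it is a negation, use item (1) together with $\neg\neg\neg x=\neg x$ (Lemma \ref{lemma_pri}(3)):
\[
\neg\neg(\neg a_2\vee\cdots\vee\neg a_{n+1})=\neg\neg\neg a_2\vee\cdots\vee\neg\neg\neg a_{n+1}=\neg a_2\vee\cdots\vee\neg a_{n+1}.
\]
So the join lies in $R(A)$. Set $c:=\neg(\neg a_2\vee\cdots\vee\neg a_{n+1})$; the display says precisely that $\neg c=\neg a_2\vee\cdots\vee\neg a_{n+1}$. Lemma \ref{lemaSergio}(4) then gives
\[
a_1\to\neg c=\neg a_1\vee\neg c=\neg a_1\vee\neg a_2\vee\cdots\vee\neg a_{n+1},
\]
which completes the induction.
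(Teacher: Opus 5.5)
Your proof is correct and follows the paper's argument. For (1) it is the same induction, with the base case from Theorems \ref{T1}, \ref{T2} and Lemma \ref{lemaSergio}(4). For (2) it uses the same trick: (1) together with $\neg\neg\neg x=\neg x$ shows that a join of negations is itself a negation, so Lemma \ref{lemaSergio}(4) can be applied again.

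The only difference is cosmetic. The paper peels off the innermost antecedent, writing $(a_1,\ldots,a_n;\neg a_{n+1})=(a_1,\ldots,a_{n-1};a_n\to\neg a_{n+1})$, and so needs only the two-term case of (1). You unfold the outermost antecedent directly from the recursive definition, and so use (1) for $n$ terms.
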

\begin{proof}
$(1)$ We prove it by induction on $n$. We prove for $n=2$. Let $a,b \in A$. By Theorem \ref{T1}, Theorem \ref{T2} and Lemma \ref{lemaSergio},  
\[ 
\begin{array}{lllllll} 
\neg \neg (a \vee b) &=& \neg (\neg a \wedge \neg b) &=& (\neg a \wedge \neg b) \to 0 \\
                                &=&  \neg a \to (\neg b \to 0) &=& \neg a \to \neg \neg b \\
                                &=& \neg \neg a \vee \neg \neg b.              
\end{array}
\]
We assume that the equation is valid for $n$ and we will prove for $n+1$.
\[
\begin{array}{llll}
\neg \neg(a_1 \vee \ldots \vee a_n \vee a_{n+1})  &=&  \neg \neg((a_1 \vee \ldots \vee a_n ) \vee  a_{n+1}) \\
&=&  \neg \neg(a_1 \vee \ldots \vee a_n) \vee  \neg \neg a_{n+1} \\
                                                   &=&  \neg \neg a_1 \vee \ldots \vee \neg \neg a_n  \vee  \neg \neg a_{n+1}. 
\end{array}
\]
$(2)$ We prove it by induction on $n$. By Lemma \ref{lemaSergio} result immediate  for $n=2$. We assume that the equation is valid for $n$ and we will prove for $n+1$. By  Lemma \ref{lemaSergio} and above item we have:
\[
\begin{array}{llll}
(a_1, \ldots, a_n; \neg a_{n+1})  &=&  (a_1, \ldots, a_{n-1}; a_{n} \to \neg a_{n+1}) \\
                                                   &=&  (a_1, \ldots, a_{n-1}; \neg a_{n} \vee\neg a_{n+1}) \\
                                                   &=&  (a_1, \ldots, a_{n-1}; \neg \neg \neg a_{n} \vee \neg \neg \neg a_{n+1}) \\ 
                                                   &=&  (a_1, \ldots, a_{n-1}; \neg\neg (\neg a_{n} \vee \neg a_{n+1}))  \\
                                                   &=&    \neg a_1 \vee \ldots \vee \neg a_{n-1} \vee \neg \neg (\neg a_{n} \vee \neg a_{n+1}) \\
                                                   &=&    \neg a_1 \vee \ldots  \vee \neg a_{n-1} \vee \neg \neg \neg a_{n} \vee \neg \neg \neg a_{n+1} \\
                                                   &=&   \neg a_1 \vee \cdots \vee \neg a_{n-1} \vee \neg a_{n} \vee \neg a_{n+1},
\end{array}
\]
as claimed. 
\end{proof}

With the developed results, we have the following characterization of Stonean Hilbert algebras.

\begin{theorem} \label{charact_stone}
Let ${\bf{A}} \in \mathrm{Hil}_{0}^{\vee}$. Then the following conditions are equivalent:
\begin{enumerate}
\item ${\bf{A}} \in \mathrm{SHil}_{0}^{\vee}$, 
\item $a \veebar b = a \vee b$, for all $a,b \in R(A)$.
\end{enumerate}
\end{theorem}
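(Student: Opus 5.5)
For $(1) \Rightarrow (2)$, let $a,b \in R(A)$, so $a = \neg\neg a$ and $b = \neg\neg b$. By definition $a \veebar b = \neg\neg(\neg a \to b)$. Since $b = \neg(\neg b)$ is a negation, item (4) of Lemma \ref{lemaSergio} applies to the implication $\neg a \to \neg(\neg b)$ and gives
\[
\neg a \to b = \neg a \to \neg(\neg b) = \neg\neg a \vee \neg\neg b = a \vee b .
\]
Hence $a \veebar b = \neg\neg(a \vee b)$. Proposition \ref{generaliz lemma sergio}(1) with $n=2$ then yields $\neg\neg(a\vee b) = \neg\neg a \vee \neg\neg b = a \vee b$. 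Alternatively, we already have $\neg a \to b = a\vee b$, and since $\neg a \to b = \neg a \to \neg\neg b$ is itself a negation (by Lemma \ref{lemma_pri}(5) it equals $\neg b \to \neg\neg a$, and implications into negations are negations), it is regular. Either way $\neg\neg(\neg a \to b) = \neg a \to b = a \vee b$, so this direction is routine.

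For $(2) \Rightarrow (1)$, let $a \in A$ and put $u = \neg a$; then $u \in R(A)$ and $\neg u = \neg\neg a \in R(A)$. In the Boolean algebra $R(\mathbf{A})$ the element $\neg u$ is the complement of $u$, so $u \veebar \neg u = 1$. Directly,
\[
u \veebar \neg u = \neg\neg(\neg u \to \neg u) = \neg\neg 1 = 1,
\]
using $\neg 1 = 1 \to 0 = 0$ and $\neg 0 = 1$. Applying (2) to the regular elements $\neg a$ and $\neg\neg a$ gives $\neg a \vee \neg\neg a = \neg a \veebar \neg\neg a = 1$, which is Stone's identity. So $\mathbf{A} \in \mathrm{SHil}_{0}^{\vee}$.

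Neither direction is really an obstacle. The only point needing care is the first direction: either cite Proposition \ref{generaliz lemma sergio}(1), or justify regularity of $\neg a \to b$ via Lemma \ref{lemma_pri}, so that the outer double negation can be removed.
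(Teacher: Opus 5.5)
Your proof is correct and is essentially the paper's: for $(1)\Rightarrow(2)$ you rewrite $\neg a\to b=\neg a\to\neg\neg b=a\vee b$ via Lemma \ref{lemaSergio}(4) and then remove the outer $\neg\neg$ with Proposition \ref{generaliz lemma sergio}(1), and for $(2)\Rightarrow(1)$ you apply (2) to $\neg a,\neg\neg a$. The only difference there is that you check $\neg\neg(\neg\neg a\to\neg\neg a)=\neg\neg 1=1$ directly instead of citing the Boolean structure of $R(\mathbf{A})$. Your alternative shortcut is also valid, but the regularity of $\neg a\to b$ for $a,b\in R(A)$ should be justified by Remark \ref{remark1} ($R(A)$ is closed under $\to$); item (5) of Lemma \ref{lemma_pri} does not by itself give it.
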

\begin{proof}
$(1) \Rightarrow (2)$ Let $a,b \in R(A)$. Then $\neg \neg a = a$ and $\neg \neg b = b$. 
Moreover, by Lemma \ref{lemaSergio} and Proposition \ref{generaliz lemma sergio}, we have 
\[
\begin{array}{llllllll}
a \veebar b &=&  \neg \neg (\neg a \to b) &=&  \neg \neg (\neg a \to \neg \neg b) &=&  \neg \neg (\neg \neg a \vee \neg  \neg b)  \\
                   & = & \neg \neg (a \vee b) &=& \neg \neg a \vee \neg \neg b &=& a \vee b.  
\end{array}
\]

$(2) \Rightarrow (1)$ Let $a \in A$. So, $\neg a\in R(A)$ and since $R({\bf{A}})$ is a Boolean algebra, we have $\neg a \veebar \neg\neg a = 1$. By assumption, $\neg a \vee \neg\neg a = 1$. Thus,  ${\bf{A}} \in \mathrm{SHil}_{0}^{\vee}$. 
\end{proof}

\section{$\neg$-morphisms between Stonean Hilbert algebras} \label{sec4}

In this section we introduce and study a particular class of morphisms defined between Hilbert algebras with supremum and Stonean Hilbert algebras, called $\neg$-morphisms.

\begin{definition} 
Let ${\bf{A}}, {\bf{B}} \in \mathrm{Hil}_{0}^{\vee}$. Let $h \colon A \to B$ be a map. We say that $h$ is a $\neg$-morphism if the following conditions are satisfied for every $a,b \in A$:
\begin{enumerate}
\item $h(1) = 1$, 
\item $h(a \to b) \leq h(a) \to h(b)$, 
\item $h(a \vee b) = h(a) \vee h(b)$, 
\item $h(\neg a) = \neg h(a)$.
\end{enumerate}
\end{definition}

In the rest of this paper we will study $\neg$-morphisms between Stonean Hilbert algebras. We denote by ${\mathcal{NSH}}_{0}^{\vee}$ the algebraic category of Stonean Hilbert algebras with $\neg$-morphisms.

\begin{remark} \label{obs morfismos}
Let $h \colon A \to B$ be a $\neg$-morphism between two Stonean Hilbert algebras ${\bf{A}}$ and ${\bf{B}}$. Then $h(0)=0$. Indeed, it follows $h(0) = h(\neg 1) = \neg h(1) = \neg 1 = 0$. Thus, it is clear that every $\neg$-morphism is a $\vee0$-semi-homomorphism. 
\end{remark}

Since every $\neg$-morphism is a $\vee0$-semi-homomorphism, we have the following result.

\begin{lemma} \cite{CM4} \label{ideal y filtro irreduc}
Let ${\bf{A}}, {\bf{B}} \in \mathrm{SHil}_{0}^{\vee}$. Let $h \colon A \to B$ be a $\neg$-morphism. Then:
\begin{enumerate}
\item $( h ( x^{c})]  \in \mathrm{Id}(B)$, for all $x \in \mathrm{X}(A)$,
\item $h^{-1}(y) \in \mathrm{X}(A)$, for all $y \in \mathrm{X}(B)$.
\end{enumerate}
\end{lemma}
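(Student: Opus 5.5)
Since by Remark \ref{obs morfismos} every $\neg$-morphism is a $\vee0$-semi-homomorphism, both items only use conditions $h(1)=1$, $h(0)=0$, $h(a\to b)\leq h(a)\to h(b)$ and $h(a\vee b)=h(a)\vee h(b)$. I will verify them directly rather than rely on the citation.

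For (1), fix $x \in \mathrm{X}(A)$. I will show that $(h(x^{c})]$ is a nonempty downset containing $0$ and closed under $\vee$. It is a downset by definition. Since $x$ is proper and implicative filters are upsets, $0 \notin x$. Hence $0 = h(0) \in h(x^c)$, so $0 \in (h(x^c)]$. For closure under joins, take $b_1 \leq h(a_1)$ and $b_2 \leq h(a_2)$ with $a_1,a_2 \notin x$. Irreducibility (in its characterization stated in Section \ref{sec2}) gives $c \notin x$ with $a_1,a_2 \leq c$. Then $a_1 \vee a_2 \leq c$, and upward closure of $x$ forces $a_1\vee a_2 \notin x$. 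Therefore $b_1 \vee b_2 \leq h(a_1)\vee h(a_2) = h(a_1\vee a_2) \in h(x^c)$. Only here is irreducibility needed.

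For (2), fix $y \in \mathrm{X}(B)$ and check the following in order.
\begin{enumerate}
\item $1 \in h^{-1}(y)$, since $h(1)=1\in y$.
\item Closure under modus ponens: if $a, a\to b \in h^{-1}(y)$, then $h(a)\in y$ and $h(a\to b) \in y$. Since $h(a\to b)\leq h(a)\to h(b)$ and $y$ is an upset, $h(a)\to h(b)\in y$, so $h(b)\in y$.
\item Properness: $h(0)=0\notin y$.
\item Primeness: if $a\vee b\in h^{-1}(y)$, then $h(a)\vee h(b)\in y$. Since $y$ is prime, $h(a)\in y$ or $h(b)\in y$.
\end{enumerate}
Because irreducible and prime proper filters coincide, $h^{-1}(y)\in\mathrm{X}(A)$.

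Both parts are routine. The only point needing care is the use of irreducibility in (1): the set $x^c$ must be closed under finite joins. Monotonicity of $h$ is not needed separately, since joins are preserved.
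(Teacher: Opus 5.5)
Your proof is correct and takes the same route as the paper. The paper also reduces to the $\vee0$-semi-homomorphism case via Remark \ref{obs morfismos} and then simply cites \cite{CM4} for both items. You have written out the routine verification that the citation leaves implicit, and you rightly note that Stone's identity plays no role and that the $\neg$-condition is used only to get $h(0)=0$.
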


\begin{remark}
Not every $\vee0$-semi-homomorphism is a $\neg$-morphism, as we will observe in the following example. Let $A = \{0, a, b, c, 1\}$ with the Hasse diagram of $\langle A, \leq \rangle$ as
follows:
\vspace{0.2cm}
\begin{center}
\begin{tikzpicture}[scale=.8,mipunto/.style ={color=black}]

\filldraw[mipunto] (0,-1.5) circle(2pt);
\filldraw[mipunto] (-1.25,-3) circle(2pt);
\filldraw[mipunto] (-1.25,-4.25) circle(2pt);
\filldraw[mipunto] (1.25,-4.25) circle(2pt);
\filldraw[mipunto] (0,-5.5) circle(2pt);

\filldraw[mipunto] (0,-6) node{$0$};
\filldraw[mipunto] (0.5,-1.5) node{$1$};
\filldraw[mipunto] (-1.75,-3) node{$b$};
\filldraw[mipunto] (-1.75,-4.25) node{$a$};
\filldraw[mipunto] (1.75,-4.25) node{$c$};

\draw (-1.25,-3) -- (0,-1.5);
\draw  (-1.25,-3) -- (-1.25,-4.25);
\draw (-1.25,-4.25) -- (0,-5.5);
\draw (1.25,-4.25) -- (0,-5.5);
\draw (1.25,-4.25) -- (0,-1.5);

\end{tikzpicture}
\end{center}
If we considerer the binary operation $\to$ on $A$ given by 
\vspace{0.2cm}
\begin{center}    
\begin{tabular}{|c|c|c|c|c|c|c|c|c|c|}\hline 
$\to$        & $0$ & $a$ & $b$  & $c$ & $1$ \\ \hline
$0$    & $1$ & $1$ & $1$ & $1$ & $1$ \\ \hline
$a$    & $c$ & $1$ & $1$ & $c$ & $1$ \\ \hline
$b$    & $0$ & $a$ & $1$ & $c$ & $1$ \\ \hline
$c$    & $a$ & $a$ & $b$ & $1$ & $1$ \\ \hline
$1$    & $0$ & $a$ & $b$ & $c$ & $1$ \\ \hline
\end{tabular}
\end{center}
then we have that ${\bf{A}}$ is a bounded Hilbert algebra with supremum. It is easy to prove that ${\bf{A}}$ is a Stonean Hilbert algebra. We consider the map $h \colon A \to A$ given by 
\vspace{0.3cm}
\begin{center}
\begin{tabular}{|c|c|c|c|c|c|c|c|c|c|}\hline 
$x$        & $0$ & $a$ & $b$  & $c$ & $1$ \\ \hline
$h(x)$    & $0$ & $a$ & $a$ & $c$ & $1$ \\ \hline
\end{tabular}
\end{center}
\vspace{0.3cm}
Then $h$ is a $\vee 0$-semi-homomorphism but not a $\neg$-morphism because
\[
h(\neg b)=h(0)=0<c=\neg a=\neg h(b).
\]
\end{remark}

\begin{remark} \label{rem_negmorf}
We also note that a $\neg$-morphism is not necessarily a $\vee0$-homomorphism. Consider the following Stonean Hilbert algebra ${\bf{A}}$ where the binary operation $\to$ is given by order, i.e., $a \to b = 1$, if $a \leq b$ and $a \to b = b$, if $a \nleq b$:
\vspace{0.2cm}
\begin{center}
\begin{tikzpicture}[scale=.8,mipunto/.style ={color=black}]

\filldraw[mipunto] (0,-1.5) circle(2pt);
\filldraw[mipunto] (0,-3) circle(2pt);
\filldraw[mipunto] (-1.25,-4.25) circle(2pt);
\filldraw[mipunto] (1.25,-4.25) circle(2pt);
\filldraw[mipunto] (0,-5.5) circle(2pt);

\filldraw[mipunto] (0,-6) node{$0$};
\filldraw[mipunto] (0.5,-1.5) node{$1$};
\filldraw[mipunto] (0.5,-3) node{$c$};
\filldraw[mipunto] (-1.75,-4.25) node{$a$};
\filldraw[mipunto] (1.75,-4.25) node{$b$};

\draw (0,-1.5) -- (0,-3);

\draw (0,-3) -- (-1.25,-4.25);
\draw (0,-3) -- (1.25,-4.25);
\draw (-1.25,-4.25) -- (0,-5.5);
\draw (1.25,-4.25) -- (0,-5.5);

\end{tikzpicture}
\end{center}
\vspace{0.4cm}
Let $h \colon A \to A$ be the map given by 
\vspace{0.3cm}
\begin{center}
\begin{tabular}{|c|c|c|c|c|c|c|c|c|c|}\hline 
$x$        & $0$ & $a$ & $b$  & $c$ & $1$ \\ \hline
$h(x)$    & $0$ & $c$ & $a$ & $c$ & $1$ \\ \hline
\end{tabular}
\end{center}
\vspace{0.3cm}
Then $h$ is a $\neg$-morphism but not a $\vee0$-homomorphism since 
\[
h(c \to a) = c \neq 1 = h(c) \to h(a).
\]
\end{remark}

The $\neg$-morphisms behave well and transfer the Stone's identity between two Stonean Hilbert algebras.

\begin{theorem}
Let ${\bf{A}}, {\bf{B}} \in \mathrm{Hil}_{0}^{\vee}$. Let $h \colon A \to B$ be a $\neg$-morphism. Then:
\begin{enumerate}
\item If ${\bf{A}}$ is a Stonean Hilbert algebra and $h$ is onto, then ${\bf{B}}$ is a Stonean Hilbert algebra.
\item If ${\bf{B}}$ is a Stonean Hilbert algebra and $h$ is 1-1, then ${\bf{A}}$ is a Stonean Hilbert algebra.
\end{enumerate}
Moreover, if $h$ is bijective, then ${\bf{A}}$ is Stonean Hilbert algebra if and only if ${\bf{B}}$ is a Stonean Hilbert algebra.
\end{theorem}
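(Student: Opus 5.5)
The plan is to compute directly with the four defining conditions of a $\neg$-morphism. Throughout I use that $h(1)=1$, that $h$ preserves $\vee$, and that $h(\neg a)=\neg h(a)$; applying the last condition twice gives $h(\neg\neg a)=\neg\neg h(a)$.

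For item (1), assume ${\bf{A}}$ is Stonean and $h$ is onto, and let $b \in B$. By surjectivity choose $a \in A$ with $h(a)=b$. Then
\[
\neg b \vee \neg\neg b = \neg h(a) \vee \neg\neg h(a) = h(\neg a) \vee h(\neg\neg a) = h(\neg a \vee \neg\neg a) = h(1) = 1 .
\]
This uses conditions (4), (3) and (1) in turn. Hence ${\bf{B}}$ satisfies Stone's identity and so belongs to $\mathrm{SHil}_{0}^{\vee}$.

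For item (2), assume ${\bf{B}}$ is Stonean and $h$ is 1-1, and let $a\in A$. The same computation, read in the other direction, gives
\[
h(\neg a \vee \neg\neg a) = \neg h(a) \vee \neg\neg h(a) = 1 = h(1),
\]
since ${\bf{B}}$ satisfies Stone's identity at the element $h(a)$. Injectivity then yields $\neg a\vee\neg\neg a = 1$, so ${\bf{A}}$ is Stonean. The final assertion about bijective $h$ is just the conjunction of (1) and (2).

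I expect essentially no obstacle here. The only point to check is that none of the steps needs ${\bf{A}}$ or ${\bf{B}}$ to be Stonean beyond what is assumed. It does not: only preservation of $1$, $\vee$ and $\neg$ is used. In particular, the paper's earlier derivation of $h(0)=0$ is not needed.
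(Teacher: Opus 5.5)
Your proposal is correct and follows essentially the same argument as the paper: for (1), pick a preimage and use preservation of $\neg$, $\vee$ and $1$ to get $\neg b\vee\neg\neg b=h(\neg a\vee\neg\neg a)=h(1)=1$; for (2), get $h(\neg a\vee\neg\neg a)=h(1)$ and cancel $h$ by injectivity. The bijective case is, as you say, just the conjunction of the two items.
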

\begin{proof}
$(1)$ As ${\bf{B}} \in \mathrm{Hil}_{0}^{\vee}$, it remains to prove that $\neg b \vee \neg \neg b=1$, for all $b \in B$. If $b \in B$, then there exists $a \in A$ such that $h(a) = b$. So, since $h$ is a $\neg$-morphism and ${\bf{A}}$ is a Stonean Hilbert algebra, we have  
\[
\neg b \vee \neg \neg b = \neg h(a) \vee \neg \neg h(a) = h(\neg a \vee \neg \neg a) = h(1) = 1.
\]
$(2)$ Let $a \in A$. Then $\neg h(a) \vee \neg \neg h(a) = 1$ because ${\bf{B}}$ is a Stonean Hilbert algebra. Then $h( \neg a \vee \neg \neg a) = h(1)$ and since $h$ is 1-1 it follows $\neg a \vee \neg \neg a = 1$. 
\end{proof}

\begin{theorem} \label{charact_neg_morfismo}
Let ${\bf{A}}, {\bf{B}} \in \mathrm{SHil}_{0}^{\vee}$. Let $h \colon A \to B$ be a $\vee0$-semi-homomorphism. Then the following conditions are equivalent:
\begin{enumerate}
\item $h$ is a $\neg$-morphism,
\item $h^{-1}(y) \in {\mathrm{Max}(A)}$, for all $y \in {\mathrm{Max}(B)}$.
\end{enumerate}
\end{theorem}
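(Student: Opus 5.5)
We prove the two implications separately. The only tools needed are Lemma \ref{lema2} (characterizations of maximal filters), Lemma \ref{lema1}, Lemma \ref{ideal y filtro irreduc}(2), and the separation Theorem \ref{separacion}.

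\emph{(1)$\Rightarrow$(2).} Let $h$ be a $\neg$-morphism and $y \in \mathrm{Max}(B)$. By Lemma \ref{ideal y filtro irreduc}(2), $h^{-1}(y)$ is an irreducible implicative filter of ${\bf{A}}$, and in particular it is proper. We verify condition (2) of Lemma \ref{lema2}. Let $a \notin h^{-1}(y)$, so $h(a) \notin y$. Since $y$ is maximal, Lemma \ref{lema2} gives $\neg h(a) \in y$. Because $h$ is a $\neg$-morphism, $\neg h(a) = h(\neg a)$, hence $h(\neg a)\in y$ and so $\neg a \in h^{-1}(y)$. Therefore $h^{-1}(y)$ is maximal.

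\emph{(2)$\Rightarrow$(1).} Assume $h$ is a $\vee0$-semi-homomorphism pulling maximal filters back to maximal filters. We must show $h(\neg a) = \neg h(a)$.

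One inequality is automatic. From $h(a\to 0)\le h(a)\to h(0) = h(a)\to 0$ we get $h(\neg a)\le \neg h(a)$.

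For the reverse inequality $\neg h(a)\le h(\neg a)$, argue by contradiction. Suppose $\neg h(a)\nleq h(\neg a)$. Then the principal filter $[\neg h(a))$ is disjoint from the ideal $(h(\neg a)]$, and Theorem \ref{separacion} gives $x \in \mathrm{X}(B)$ with $\neg h(a)\in x$ and $h(\neg a)\notin x$. We must push this up to a maximal filter, and this is the key step.

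By Lemma \ref{lemaSergio}(2) (or Lemma \ref{lema1}(3) combined with properness), $x$ is contained in a unique maximal filter $y$, and $\neg h(a)\in y$. Since $y$ is proper, $h(a)\notin y$ by Lemma \ref{lema1}(1), so $a\notin h^{-1}(y)$. By hypothesis $h^{-1}(y)$ is maximal, so Lemma \ref{lema2} gives $\neg a \in h^{-1}(y)$, that is, $h(\neg a)\in y$.

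This alone is not yet a contradiction, since $h(\neg a)\notin x$ only concerns $x$, not $y$. The Stonean property resolves this. Since $\neg\neg a\vee\neg a = 1$, we have $h(\neg\neg a)\vee h(\neg a)=h(1)=1 \in x$. Because $x$ is prime, either $h(\neg a)\in x$, which contradicts the choice of $x$, or $h(\neg\neg a)\in x\subseteq y$. In the second case, the inequality already proved gives $h(\neg\neg a)\le \neg h(\neg a)$, so $\neg h(\neg a)\in y$. Together with $h(\neg a)\in y$, this contradicts Lemma \ref{lema1}(1) applied to the proper filter $y$. Hence $\neg h(a)\le h(\neg a)$, and so $h(\neg a)=\neg h(a)$.

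The main obstacle was exactly this transfer from the irreducible filter $x$ produced by separation to the maximal filter $y$. The Stone identity together with primeness of $x$ is what bridges it.
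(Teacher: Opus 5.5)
Your proof is correct. The direction (1)$\Rightarrow$(2) is the same as the paper's. For (2)$\Rightarrow$(1) you also follow the paper's outline: separate to get an irreducible $x$ with $\neg h(a)\in x$ and $h(\neg a)\notin x$, then pass to a maximal $y\supseteq x$. Where you differ is in how the contradiction is reached.

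In your notation, the paper works on the $\mathbf{A}$ side. Since $\neg a\notin h^{-1}(x)$, Lemma~\ref{lema1}(3) gives a maximal $M\supseteq h^{-1}(x)$ with $a\in M$. Uniqueness of the maximal filter above an irreducible filter in a Stonean algebra (Lemma~\ref{lemaSergio}(2), applied to $\mathbf{A}$) then forces $M=h^{-1}(y)$. Hence $h(a)\in y$, contradicting $\neg h(a)\in y$.

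You instead use the hypothesis directly to get $h(\neg a)\in y$. You then combine the Stone identity $\neg a\vee\neg\neg a=1$ in $\mathbf{A}$, primeness of $x$, and the easy inequality $h(\neg b)\le\neg h(b)$ with $b=\neg a$ to place $\neg h(\neg a)$ in $y$ as well.

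What each route buys:
\begin{itemize}
\item Yours avoids Lemma~\ref{lemaSergio} and uses only the defining identity, so it is more self-contained. It also states explicitly the inequality $h(\neg a)\le\neg h(a)$, which the paper's proof uses without mentioning.
\item The paper's is shorter once Lemma~\ref{lemaSergio} is available. It expresses the Stonean hypothesis in the order-theoretic form (every irreducible filter lies below a unique maximal one) that fits the duality of Section~5.
\end{itemize}
In both arguments only $\mathbf{A}$ needs to be Stonean. On the $\mathbf{B}$ side all that is used is the existence of a maximal filter above $x$, which holds in every $H_0^{\vee}$-algebra by Lemma~\ref{lema1}(3), as you noted.
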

\begin{proof}
$(1) \Rightarrow (2)$ Let $y \in {\mathrm{Max}(B)}$. Since ${\mathrm{Max}(B)} \subseteq \mathrm{X}(B)$, $y \in \mathrm{X}(B)$ and  by Lemma \ref{ideal y filtro irreduc}, $h^{-1}(y)$ is an implicative filter. Let $a \in A$ be such that $a \notin h^{-1}(y)$. Then $h(a) \notin y$. As $y$ is a maximal implicative filter, by Lemma \ref{lema2} we have $\neg h(a) \in y$. Since $h$ is $\neg$-morphism, we have $ h(\neg a) \in y$ and so, $\neg a \in h^{-1}(y)$. Therefore, by Lemma \ref{lema2} it follows $h^{-1}(y) \in {\mathrm{Max}(A)}$. 

$(2) \Rightarrow (1)$ Conversely, suppose there exists $a \in A$ such that $\neg h(a) \nleq h(\neg a)$. Then there exists $y \in \mathrm{X}(B)$ such that $\neg h(a) \in y$ and $h(\neg a) \notin y$. By Lemma \ref{lemaSergio} there exists  $z \in {\mathrm{Max}(B)}$ such that $y \subseteq z$. So, $\neg h(a) \in z$ and $h^{-1}(y) \subseteq h^{-1}(z)$. By hypothesis we have $h^{-1}(z) \in {\mathrm{Max}(A)}$ and $h^{-1}(y) \in \mathrm{X}(A)$. Since $h(\neg a) \notin y$, it follows $\neg a \notin h^{-1}(y)$ and by Lemma \ref{lema1} there exists $M \in {\mathrm{Max}(A)}$ such that $h^{-1}(y) \subseteq M$ and $a \in M$. Thus, $M=h^{-1}(z)$ and so, $a \in h^{-1}(z)$, i.e., $h(a) \in z$. Hence, by Lemma \ref{lema1} it follows $\neg h(a) \notin z$, which is a contradiction. 
\end{proof}

\begin{remark}
If ${\bf{A}}$ and ${\bf{B}}$ are two local bounded Hilbert algebras with supremum (see Example \ref{ejemplolocal}), then ${\bf{A}}$ and ${\bf{B}}$ have exactly one maximal implicative filter each, i.e., ${\mathrm{Max}(A)}=\{x\}$ and ${\mathrm{Max}(B)}=\{y\}$. Then, by Theorem \ref{charact_neg_morfismo}, it is immediate to see that a $\vee0$-semi-homomorphism is a $\neg$-morphism if and only if $h^{-1}(y) = x$.
\end{remark}

If ${\bf{A}}$ is a bounded Hilbert algebra, then the structure $\langle R(A), \to, 0, 1 \rangle$ is a bounded Hilbert algebra and the map $r \colon A \to R(A)$ given by $r(x) = \neg \neg x$ is a onto homomorphism of bounded Hilbert algebras (see \cite{Saeid2}). 

\begin{theorem} 
Let ${\bf{A}} \in \mathrm{SHil}_{0}^{\vee}$. Then, $r \colon A \to R(A)$ is a surjective $\vee0$-homomorphism of Stonean Hilbert algebras.
\end{theorem}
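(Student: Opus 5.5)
We need to show three things. First, $R(A)$ with $\to$, the join $\vee$ inherited from $A$, $0$ and $1$ is a Stonean Hilbert algebra. Second, $r$ preserves $1$, $0$, $\to$ (both inequalities) and $\vee$. Third, $r$ is onto.

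\emph{Structure of $R(A)$.} By \cite{Saeid2}, $\langle R(A),\to,0,1\rangle$ is a bounded Hilbert algebra, so $\to$ is closed on $R(A)$. For the join, let $a,b\in R(A)$. Then $a=\neg\neg a$ and $b=\neg\neg b$, so Proposition \ref{generaliz lemma sergio}(1) gives $\neg\neg(a\vee b)=\neg\neg a\vee\neg\neg b=a\vee b$. Hence $a\vee b\in R(A)$, and this element is the supremum of $a,b$ in $R(A)$, as also follows from Theorem \ref{charact_stone}. The order of $R(A)$ is the restriction of the order of $A$, so the compatibility condition "$a\to b=1$ iff $a\vee b=b$" is inherited from $A$. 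Thus $R(A)\in\mathrm{Hil}_0^\vee$. For Stone's identity, note that negation in $R(A)$ is the same $\neg$, so for $a\in R(A)$ the identity $\neg a\vee\neg\neg a=1$ holds because it holds in $A$.

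\emph{Morphism properties.} Clearly $r(1)=\neg\neg 1=1$ and $r(0)=\neg\neg 0=0$. Join preservation is exactly Proposition \ref{generaliz lemma sergio}(1) with $n=2$: $r(a\vee b)=\neg\neg(a\vee b)=\neg\neg a\vee\neg\neg b=r(a)\vee r(b)$.

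For the implication, Lemma \ref{lemma_pri}(6) gives $r(a\to b)\le r(a)\to r(b)$. For the reverse inequality, use that $r$ is a homomorphism of bounded Hilbert algebras \cite{Saeid2}, which gives $\neg\neg(a\to b)=\neg\neg a\to\neg\neg b$. If one wants this self-contained, the expected obstacle is the inequality $\neg\neg a\to\neg\neg b\le\neg\neg(a\to b)$. It can be obtained as follows:
\begin{enumerate}
\item By Lemma \ref{lemma_pri}(5), it is equivalent to $\neg(a\to b)\le\neg(\neg\neg a\to\neg\neg b)$.
\item That in turn reduces to showing $\neg(a\to b)\le\neg\neg a$ and $\neg(a\to b)\le\neg b$.
\item These two inequalities follow from $b\le a\to b$ and $\neg a\le a\to b$ via antitonicity (Lemma \ref{lemma_pri}(2)).
\end{enumerate}
Combining the two inequalities, $r$ is a $\vee0$-homomorphism.

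\emph{Surjectivity.} Every $a\in R(A)$ satisfies $a=\neg\neg a=r(a)$, so $r$ is onto.
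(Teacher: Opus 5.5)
Your proof is correct and follows essentially the same route as the paper. The implicative part comes from \cite{Saeid2}, and the one new ingredient, preservation of $\vee$ (equivalently, closure of $R(A)$ under the join of $A$, which the paper phrases as $\veebar=\vee$ via Theorem \ref{charact_stone}), is Proposition \ref{generaliz lemma sergio}(1). You verify the Stonean $H_0^{\vee}$-structure of $R(A)$ directly instead of citing \cite{Saeid1}; your optional self-contained argument for $\neg\neg a\to\neg\neg b\le\neg\neg(a\to b)$ is sound, though its step (2) needs a one-line justification, e.g. $\neg(\neg\neg a\to\neg\neg b)=\neg(\neg a\vee\neg\neg b)=\neg\neg a\wedge\neg b$ by Lemma \ref{lemaSergio}(4).
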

\begin{proof}
Let $a,b \in A$. By the results given in \cite{Saeid1} and Theorem \ref{charact_stone} we know that a bounded Hilbert algebra with supremum ${\bf{A}}$ is Stonean if and only if $\langle R(A), \to, \vee, 0, 1 \rangle$ is a Stonean Hilbert algebra. It remains only to prove that $r(a \vee b) = r(a) \veebar r(b)$. Indeed, by Proposition \ref{generaliz lemma sergio} and Theorem \ref{charact_stone}, $ r(a \vee b) = \neg \neg (a \vee b) = \neg \neg a \vee \neg \neg b = r(a) \veebar r(b)$, as claimed. 
\end{proof}

If ${\bf{A}}$ is a bounded Hilbert algebra we know that $R({\bf{A}}) = \langle R(A),  \veebar, \barwedge, \neg, 0, 1 \rangle$ is a Boolean algebra. Now, we show that $\neg$-morphisms between Stonean Hilbert algebras naturally induce Boolean homomorphisms.

\begin{theorem} \label{functor}
Let ${\bf{A}}, {\bf{B}} \in \mathrm{SHil}_{0}^{\vee}$. Let $h \colon A \to B$ be a $\neg$-morphism. Then $\widehat{h} \colon R(A) \to R(B)$ given by $\widehat{h}(x) = \neg \neg h(x)$ is a Boolean homomorphism. Moreover, $r_{\bf{A}} \circ  h = \widehat{h} \circ r_{\bf{B}}$, i.e., the following diagram commute:
\begin{center}
\[
\xymatrix{
\bf{A} \ar[d]_{r_{\bf{A}}} \ar[r]^{h} & \bf{B} \ar[d]^{r_{\bf{B}}}  \\
R({\bf{A}}) ~\ar[r]_{\widehat{h}} & R({\bf{B}}) }
\]
\end{center}
\end{theorem}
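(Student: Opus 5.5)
Since $h$ commutes with $\neg$, for $x\in R(A)$ we have $x=\neg\neg x$, and so
\[
h(x)=h(\neg\neg x)=\neg\neg h(x).
\]
Thus $h$ already maps $R(A)$ into $R(B)$, and $\widehat{h}$ is just the restriction of $h$ to $R(A)$. The plan is to verify the Boolean homomorphism conditions for this restriction, using the description of the operations on $R(A)$ available in the Stonean case, and then to check the diagram.

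\emph{Constants and complement.} By Remark \ref{obs morfismos}, $\widehat{h}(0)=h(0)=0$, and $\widehat{h}(1)=h(1)=1$. For $x\in R(A)$, the Boolean complement is $\neg x$, and $\widehat{h}(\neg x)=h(\neg x)=\neg h(x)=\neg \widehat{h}(x)$.

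\emph{Joins.} Let $x,y\in R(A)$. By Theorem \ref{charact_stone}, $x\veebar y=x\vee y$, and the same holds in $R(B)$. Hence
\[
\widehat{h}(x\veebar y)=h(x\vee y)=h(x)\vee h(y)=\widehat{h}(x)\veebar\widehat{h}(y),
\]
where the last equality uses that $h(x),h(y)\in R(B)$.

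\emph{Meets.} Preservation of $\barwedge$ then follows by De Morgan in the Boolean algebras $R(A)$ and $R(B)$, since $x\barwedge y=\neg(\neg x\veebar\neg y)$. Alternatively, one can compute directly from $x\barwedge y=\neg(x\to\neg y)$ together with Lemma \ref{lemaSergio}(4):
\[
x\barwedge y=\neg(\neg x\vee\neg y),
\]
and $h$ preserves $\neg$ and $\vee$.

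\emph{The diagram.} The diagram as written, $r_{\bf A}\circ h=\widehat h\circ r_{\bf B}$, is ill-typed. I will read it as the intended equation $\widehat{h}\circ r_{\bf A}=r_{\bf B}\circ h$. For $a\in A$,
\[
\widehat{h}(r_{\bf A}(a))=\neg\neg h(\neg\neg a)=\neg\neg\neg\neg h(a)=\neg\neg h(a)=r_{\bf B}(h(a)),
\]
using that $h$ preserves $\neg$ together with Lemma \ref{lemma_pri}(3).

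\emph{Main obstacle.} No step is genuinely hard. The one point requiring care is that the Boolean join on $R(A)$ is $\veebar$, not $\vee$ in general, so Theorem \ref{charact_stone} is needed to transfer $\vee$-preservation of $h$ to $\veebar$-preservation. This is exactly where the Stonean hypothesis on both ${\bf A}$ and ${\bf B}$ enters.
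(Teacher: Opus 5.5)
Your proof is correct and follows essentially the same route as the paper: constants and complement are immediate, joins go through Theorem \ref{charact_stone}, meets go through Lemma \ref{lemaSergio}(4) (your De Morgan alternative works equally well), and the diagram reduces to $\neg\neg h(\neg\neg a)=\neg\neg h(a)$. Your observation that $\widehat{h}$ is simply the restriction of $h$ to $R(A)$ streamlines the paper's computations. Your reading of the diagram as $\widehat{h}\circ r_{\bf A}=r_{\bf B}\circ h$ is exactly what the paper's proof actually establishes.
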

\begin{proof}
Let $a,b \in R(A)$. Then it is immediate that $\widehat{h}(1)=1$, $\widehat{h}(0)=0$ and $\widehat{h}(\neg a) = \neg h(a)$. By Lemma \ref{lemaSergio} and since $h$ is a $\neg$-morphism we have 
\[
\begin{array}{lllllllll}
\widehat{h}(a \barwedge b) &=& \neg \neg h(\neg (a \to \neg b)) &=& \neg h(\neg a \vee \neg b) \\ 
                                            &=& \neg (\neg h(a) \vee \neg h(b)) &=& \neg (\neg \neg \neg h(a) \vee \neg \neg \neg h(b)) \\
                                            &=& \neg (\neg \neg h(a) \to \neg \neg \neg h(b)) &=& \widehat{h}(a) \barwedge \widehat{h}(b). 
\end{array}
\]
By Proposition \ref{generaliz lemma sergio} and Theorem \ref{charact_stone}, 
\[
\begin{array}{lllllllll}
\widehat{h}(a \veebar b) &=& \neg \neg h(\neg \neg (\neg a \to b)) &=& \neg \neg h(\neg a \to \neg \neg b) &=& \neg \neg h(a \vee b) \\
                                       &=& \neg \neg (h(a) \vee h(b)) &=& \neg \neg h(a) \vee \neg \neg h(b) &=& \widehat{h}(a) \veebar \widehat{h}(b).                
\end{array}
\]
Hence, $\widehat{h}$ is a Boolean homomorphism between $R({\bf{A}})$ and $R({\bf{B}})$. Finally, since $h$ is a $\neg$-morphism, it follows 
\[
r_{\bf{B}}(h(a)) = \neg \neg h(a) = \neg \neg h(\neg \neg a) = \widehat{h}(r_{\bf{A}}(a)).
\qedhere
\]
\end{proof}

\begin{remark}
By Theorem \ref{functor}, it is clear that the assignments ${\bf{A}} \mapsto R({\bf{A}})$ and $h \mapsto \widehat{h}$ determine a functor from the category of Stonean Hilbert algebras with $\neg$-morphisms and the category of Boolean algebras with Boolean homomorphism.
\end{remark}

\section{Topological duality for ${\mathcal{NSH}}_{0}^{\vee}$} \label{sec5}

In this section we introduce Stonean $H^{\vee}_0$-spaces. We consider the category whose objects are Stonean $H^{\vee}_0$-spaces with certain maps and we prove that it is dually equivalent to the algebraic category whose objects are Stonean Hilbert algebras with $\neg$-morphisms as arrows. 

\begin{definition} \label{Stone_space}
Let $\langle X, \mathcal{T}_{\mathcal{K}} \rangle$ be an $H^{\vee}_0$-space. Then $\langle X, \mathcal{T}_{\mathcal{K}}\rangle$ is a Stonean $H^{\vee}_0$-space if $(U] \in D(X)$, for all $U \in D(X)$.
\end{definition}

\begin{theorem} \label{charact dual space}
Let ${\bf{A}} \in \mathrm{SHil}_{0}^{\vee}$. Then $\langle \mathrm{X}(A), \mathcal{T}_{\mathcal{K}_{A}} \rangle$ is a Stonean $H^{\vee}_0$-space.
\end{theorem}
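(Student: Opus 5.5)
Since $\langle \mathrm{X}(A), \mathcal{T}_{\mathcal{K}_{A}} \rangle$ is already an $H^{\vee}_0$-space (recalled in Section \ref{sec2}), Definition \ref{Stone_space} leaves one thing to check: $(U] \in D(\mathrm{X}(A))$ for every $U \in D(\mathrm{X}(A))$. Every such $U$ has the form $\varphi(a)$ for some $a \in A$, because $D(\mathrm{X}(A)) = \{\varphi(a) \colon a \in A\}$. So I need an element $b \in A$ with $(\varphi(a)] = \varphi(b)$, and the natural candidate is $b = \neg\neg a$.

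The first step uses Remark \ref{obs varphi(neg a)}. It gives $\varphi(\neg a) = (\varphi(a)]^c$, and hence
\[
(\varphi(a)] = \varphi(\neg a)^c .
\]
It therefore suffices to prove $\varphi(\neg a)^c = \varphi(\neg\neg a)$. In terms of filters, this says that for every $x \in \mathrm{X}(A)$, $\neg a \notin x$ if and only if $\neg\neg a \in x$.

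For the direction ($\Leftarrow$), assume $\neg\neg a \in x$. Lemma \ref{lema1}(1), applied to the element $\neg a$, gives $\neg\neg a \notin x$ whenever $\neg a \in x$. Hence $\neg a \notin x$.

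For the direction ($\Rightarrow$), I use Stone's identity and primeness. Since $\neg a \vee \neg\neg a = 1 \in x$ and irreducible filters are prime, either $\neg a \in x$ or $\neg\neg a \in x$. So $\neg a \notin x$ forces $\neg\neg a \in x$.

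Combining the two directions gives $(\varphi(a)] = \varphi(\neg\neg a) \in D(\mathrm{X}(A))$, which proves the statement. The only genuinely Stonean ingredient is the ($\Rightarrow$) direction; the rest is bookkeeping with $\varphi$ and Remark \ref{obs varphi(neg a)}. As an alternative route, I could use Lemma \ref{lemaSergio}(2): $\neg a \notin x$ gives a unique maximal filter $M \supseteq x$ with $a \in M$, though this argument is less direct.
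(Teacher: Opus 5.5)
Your proof is correct and follows the paper's route. Both arguments prove $(\varphi(a)]^{c} \cap \varphi(\neg\neg a) = \emptyset$, then use Stone's identity and primeness to get $(\varphi(a)]^{c} \cup \varphi(\neg\neg a) = \mathrm{X}(A)$, whence $(\varphi(a)] = \varphi(\neg\neg a)$. The one difference is that you get the disjointness directly from Lemma \ref{lema1}(1) applied to $\neg a$, whereas the paper detours through a maximal filter containing $x$ (via Lemmas \ref{lemaSergio}, \ref{lema2} and \ref{lema1}); your shortcut is valid and makes clear that this half does not need the Stonean hypothesis.
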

\begin{proof}
Assume that ${\bf{A}}$ is a Stonean Hilbert algebra. So, $\langle \mathrm{X}(A), \mathcal{T}_{\mathcal{K}_{A}} \rangle$ is an $H^{\vee}_0$-space. It only remains to prove that $(\varphi(a)] \in D(\mathrm{X}(A))$, for all $a \in A$. 

First, we prove that $(\varphi(a)]^{c} \cap \varphi(\neg \neg a) = \emptyset$. In contrast, suppose that there exists $x \in \mathrm{X}(A)$ such that $x \in (\varphi(a)]^{c} \cap \varphi(\neg \neg a)$, i.e., $[x) \cap \varphi(a) = \emptyset$ and $\neg \neg a \in x$. By Lemma \ref{lemaSergio}, there exists a maximal implicative filter $M$ such that $x \subseteq M$ and consequently $\neg \neg a \in M$. Thus, by Lemma \ref{lema2}, $a \in M$ and by Lemma \ref{lema1} we have $\neg a \notin x$. Then, by Lemma \ref{lema1}, there is $y \in \mathrm{X}(A)$ such that $x \subseteq y$ and $a \in y$, or equivalently, $ y \in [x) \cap \varphi(a)$, which is absurd.

From the fact that ${\bf{A}}$ is a Stonean Hilbert algebra and Remark \ref{obs varphi(neg a)}, it follows that
\[
\begin{array}{llllllll}
\mathrm{X}(A) &=& \varphi(1) &=& \varphi(\neg a \vee \neg \neg a) \\
                        &=& \varphi(\neg a) \cup \varphi(\neg \neg a) &=& (\varphi(a)]^{c} \cup \varphi(\neg \neg a).
\end{array}
\]
Thus, $ \varphi(\neg \neg a)=(\varphi(a)]$ and therefore, $(\varphi(a)]\in D(\mathrm{X}(A))$.
\end{proof}

\begin{theorem} \label{charact dual Stone algebra}
Let $\langle X, \mathcal{T}_\mathcal{K} \rangle$ be a Stonean $H^{\vee}_0$-space. Then $\langle D(X), \cup, \Rightarrow, \emptyset, X \rangle$ is a Stonean Hilbert algebra. 
\end{theorem}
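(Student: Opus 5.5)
Since $\langle X, \mathcal{T}_{\mathcal{K}} \rangle$ is in particular an $H^{\vee}_0$-space, we already know that $\langle D(X), \cup, \Rightarrow, \emptyset, X \rangle \in \mathrm{Hil}_{0}^{\vee}$. So it only remains to verify Stone's identity $\neg U \cup \neg\neg U = X$ for every $U \in D(X)$. The first step is to compute negation in $D(X)$, exactly as in Remark \ref{obs varphi(neg a)}:
\[
\neg U = U \Rightarrow \emptyset = (U \cap X]^{c} = (U]^{c}.
\]
Here the order on $X$ is the specialization order used to define $(\cdot]$, and elements of $D(X)$ are upsets for it.

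Next I compute $\neg\neg U = ((U]^{c}]^{c}$. The Stonean hypothesis in Definition \ref{Stone_space} gives $(U] \in D(X)$, so $(U]$ is itself an upset. It is also a downset, by construction.

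Since $(U]$ is an upset, its complement $(U]^{c}$ is a downset. Hence $((U]^{c}] = (U]^{c}$, and therefore
\[
\neg\neg U = ((U]^{c}]^{c} = (U].
\]
Combining the two computations,
\[
\neg U \cup \neg\neg U = (U]^{c} \cup (U] = X,
\]
which is the top element of $D(X)$. So Stone's identity holds, and $D(X)$ is a Stonean Hilbert algebra.

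The main point to check carefully, and the only possible obstacle, is that every member of $D(X)$ is an upset for the order that defines $\Rightarrow$. This matters because it is what lets the hypothesis $(U] \in D(X)$ make $(U]$ an upset, and hence make $(U]^{c}$ downward closed. I would justify it from the duality recalled in Section \ref{sec2}: $D(X)$ is a subalgebra of the complex algebra $\mathrm{H}(X) = \langle \mathrm{Up}(X), \Rightarrow, \cup, \emptyset, X \rangle$, so $D(X) \subseteq \mathrm{Up}(X)$. Equivalently, complements of open sets are closed, and closed sets are upsets for the order given by topological closure, as in \cite{CM1}.

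If one prefers an algebraic route, one can instead transport the problem through the isomorphism $D(X) \cong D(\mathrm{X}(D(X)))$ induced by $\epsilon$. But the direct set-theoretic computation above is the proof I would write.
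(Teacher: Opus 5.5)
Your proposal is correct and follows essentially the same route as the paper: both compute $\neg U=(U]^{c}$ and $\neg\neg U=((U]^{c}]^{c}$, and then use that $(U]\in D(X)$ is an upset, so $(U]^{c}$ is a downset. The only difference is presentation: the paper argues by contradiction from a point $x\notin\neg U\cup\neg\neg U$, while you directly obtain $\neg\neg U=(U]$ and hence $\neg U\cup\neg\neg U=X$.
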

\begin{proof}
As  $\langle X, \mathcal{T}_\mathcal{K} \rangle$ is a $H^{\vee}_0$-space then $\langle D(X), \cup, \Rightarrow, \emptyset, X \rangle$ is a bounded Hilbert algebra with supremum. It only remains to prove that $\neg V \cup \neg \neg V = X$, for all $V \in D(X)$. For this, suppose there exists $x \in X$ such that  $x \notin \neg V \cup \neg \neg V$ for some $V \in D(X)$, i.e., $x \notin \neg V = V\Rightarrow \emptyset= (V]^c$ and $x \notin \neg \neg V = ((V]^c]^c.$  Thus, $x \in (V]$ and $x\in ((V]^c]$. So, there exists $y \in (V]^c$ such that $x \leq y.$ By assumption, $(V] \in D(X)$ and as $x \leq y$ we have $y \in(V]$, which is impossible. Therefore, $\neg V \cup \neg \neg V = X$, for all $V \in D(X)$ and $\langle D(X), \cup, \Rightarrow, \emptyset, X \rangle$ is a Stonean Hilbert algebra.
\end{proof}

The fact that not every $\neg$-morphism is a $\vee0$-homomorphism (see Remark \ref{rem_negmorf}) motivates us to find a duality for the category of Stonean Hilbert algebras with $\neg$-morphisms. We consider the spectral maps and impose a condition on them in order to prove that they are the morphisms corresponding to the $\neg$-morphisms defined between Stonean Hilbert algebras.

\begin{definition}\label{def: negmap}
Let $\langle X_{1}, \mathcal{T}_{\mathcal{K}_{1}} \rangle$, $\langle X_{2}, \mathcal{T}_{\mathcal{K}_{2}} \rangle$ be two Stonean $H_{0}^{\vee}$-spaces. Let $f \colon X_{1} \to X_{2}$ be a spectral map. We say that $f$ is a $\neg$-map if satisfies the following condition: 
\begin{equation} \label{SH}
f(x) \leq y \text{ then } \exists z \in X_1 \text{ such that } x \leq z \text{ and } y \leq f(z).
\end{equation}
\end{definition}

Denote by $\mathcal{NMS}_{0}^{\vee}$ the category of Stonean $H_{0}^{\vee}$-spaces with $\neg$-maps.

\begin{theorem}
Let ${\bf{A}}, {\bf{B}} \in \mathrm{SHil}_{0}^{\vee}$. Let $h \colon A \to B$ be a $\neg$-morphism. Then the map $f_{h} \colon \mathrm{X}(B) \to \mathrm{X}(A)$ defined by
\[
f_{h}(x) = h^{-1}(x)
\]
is a $\neg$-map.
\end{theorem}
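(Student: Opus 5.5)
The plan is to check the two requirements of Definition \ref{def: negmap} separately: first that $f_{h}$ is a spectral map, and then that it satisfies condition (\ref{SH}).

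\textbf{Spectrality.} By Remark \ref{obs morfismos}, every $\neg$-morphism is a $\vee0$-semi-homomorphism. So Lemma \ref{ideal y filtro irreduc} shows that $f_h$ maps $\mathrm{X}(B)$ into $\mathrm{X}(A)$. The duality of \cite{CM4} between ${\mathcal{SH}}_{0}^{\vee}$ and ${\mathcal{SMS}}_{0}^{\vee}$ then gives that $f_h$ is a spectral map; explicitly, $f_h^{-1}(\varphi_A(a)^c)=\varphi_B(h(a))^c\in\mathcal{K}_B$. By Theorem \ref{charact dual space} both dual spaces are Stonean $H_0^{\vee}$-spaces. It therefore only remains to verify (\ref{SH}).

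\textbf{Condition (\ref{SH}).} Let $x\in \mathrm{X}(B)$ and $y\in\mathrm{X}(A)$ with $h^{-1}(x)\subseteq y$. We must find $z\in \mathrm{X}(B)$ such that $x\subseteq z$ and $y\subseteq h^{-1}(z)$, i.e. $h(y)\subseteq z$. Take $F=\mathrm{Fig}(x\cup h(y))$ and the ideal $I=\{0\}$. If $0\notin F$, then Theorem \ref{separacion} yields an irreducible $z\supseteq F$, and this $z$ works. So the whole point is to show that $0\notin F$.

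\textbf{Showing $0\notin F$ (the main obstacle).} Suppose $0\in F$. Then there are $c_1,\dots,c_k\in x$ and $b_1,\dots,b_n\in y$ such that the iterated implication of all of these elements into $0$ equals $1$.

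First, using the exchange law $a\to(b\to c)=b\to(a\to c)$, I reorder so that the $c_i$ come first. Then modus ponens in the filter $x$ gives
\[
(h(b_1),\dots,h(b_n);0)\in x .
\]
If $n=0$ this says $0\in x$, which is impossible.

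Next, rewrite this element as $(h(b_1),\dots,h(b_{n-1});\neg h(b_n))$. By Proposition \ref{generaliz lemma sergio}(2) and the $\neg$-morphism axioms,
\[
(h(b_1),\dots,h(b_{n-1});\neg h(b_n))=\neg h(b_1)\vee\cdots\vee\neg h(b_n)=h(\neg b_1\vee\cdots\vee\neg b_n).
\]
For $n=1$ we simply have $\neg h(b_1)=h(\neg b_1)$.

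Hence $\neg b_1\vee\cdots\vee\neg b_n\in h^{-1}(x)\subseteq y$. Since $y$ is prime, $\neg b_i\in y$ for some $i$. But $b_i\in y$ as well, which contradicts Lemma \ref{lema1}(1).

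The delicate bookkeeping is in this last step: reducing a generic member of $\mathrm{Fig}(x\cup h(y))$ to an iterated implication built only from the $h(b_i)$, and then applying Proposition \ref{generaliz lemma sergio}. This is exactly where the Stonean hypothesis on $\mathbf{B}$ and the condition $h(\neg a)=\neg h(a)$ are used.
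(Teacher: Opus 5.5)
Your proof is correct and follows the paper's argument essentially step for step. Spectrality comes from the $\vee0$-semi-homomorphism duality of \cite{CM4}. Condition (\ref{SH}) follows by showing $0\notin\mathrm{Fig}(x\cup h(y))$ via Proposition \ref{generaliz lemma sergio}(2), the identity $h(\neg a)=\neg h(a)$ and primeness of $y$, and then applying Theorem \ref{separacion}. Your extra bookkeeping (reordering by the exchange law, and the degenerate cases $n=0$ and $n=1$) only makes explicit what the paper leaves implicit.
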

\begin{proof}
Since $h$ is a  $\neg$-morphism, by Remark \ref{obs morfismos}, $h$ it is a $0\vee$-semi-homomorphism. According to the results proven in \cite{CM4}, $f_{h}$ is a spectral map. It only remains to prove that $f_{h}$ satisfies the condition (\ref{SH}) of Definition \ref{def: negmap}. Assume that $f_h(x) \subseteq y$ for $x \in \mathrm{X}(B)$ and $y \in \mathrm{X}(A)$. Suppose that $0 \in {\mathrm{Fig}}( x \cup h(y))$. So, there exist $q_1, \ldots, q_n \in x$ and $p_1,\ldots, p_m \in y$ such that $(q_1, \ldots, q_n, h(p_1), \ldots, h(p_m); 0) = 1 \in x$. Since $q_1, \ldots, q_n \in x$ and $x \in \mathrm{X}(B)$, $(h(p_1), \ldots, h(p_m); 0 ) \in x$. By Proposition \ref{generaliz lemma sergio}, and since $h$ is a $\neg$-morphism, we have 
\[
\begin{array}{llll}
(h(p_1),\ldots, h(p_m); 0) &=& (h(p_1),\ldots, h(p_{m-1}); h(p_m) \to 0) \\
&=& (h(p_1),\ldots, h(p_{m-1}); \neg h(p_m)) \\
&=& \neg h(p_1) \vee \ldots  \vee\neg h(p_{m-1}) \vee \neg h(p_m) \\
&=& h(\neg p_1 \vee \ldots \vee \neg p_{m-1} \vee  \neg p_m).
\end{array}
\]
It folllows $h(\neg p_1 \vee \ldots \vee  \neg p_m) \in x$. By assumption, $h^{-1}(x) \subseteq y$ and so, $\neg p_1 \vee \ldots \vee \neg p_{m} \in y$. As $y$ is a prime filter, there exists $i \in \{1, \ldots, m \}$ such that $\neg p_{i} \in y$ which is impossible because $p_{i} \in y$. Thus, $0 \notin {\mathrm{Fig}}( x \cup h(y))$ and by Theorem \ref{separacion} there is $z \in \mathrm{X}(B)$ such that  $x \subseteq z$ and $h(y)\subseteq z$, and consequently, $y \subseteq h^{-1}(z) = f_h(z)$. 
\end{proof}

\begin{theorem}
Let $\langle X_{1}, \mathcal{T}_{\mathcal{K}_{1}} \rangle $, $\langle X_{2}, \mathcal{T}_{\mathcal{K}_{2}} \rangle$ be two Stonean $H_{0}^{\vee}$-spaces. Let $f \colon X_{1} \to X_{2}$ be a $\neg$-map. Then, the map $h_{f} \colon D(X_{2}) \to D(X_{1})$ defined by
\[
h_{f}(U) = f^{-1}(U)
\]
is a $\neg$-morphism.
\end{theorem}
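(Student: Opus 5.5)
Since $f$ is a spectral map, the duality of \cite{CM4} already tells us that $h_{f}$ is a $\vee0$-semi-homomorphism from $D(X_{2})$ to $D(X_{1})$. In particular $h_{f}$ is well defined: $f^{-1}(U) \in D(X_{1})$ whenever $U \in D(X_{2})$, because complements are preserved under preimages and $f^{-1}(U^{c}) \in \mathcal{K}_{1}$. We also get $h_{f}(X_{2}) = X_{1}$, $h_{f}(U \cup V) = h_{f}(U) \cup h_{f}(V)$, and $h_{f}(U \Rightarrow V) \subseteq h_{f}(U) \Rightarrow h_{f}(V)$. So the only new condition is $h_{f}(\neg U) = \neg h_{f}(U)$. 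By Remark \ref{obs varphi(neg a)}, computed in $D(X)$, we have $\neg U = U \Rightarrow \emptyset = (U]^{c}$. The identity to prove is therefore
\[
f^{-1}\big((U]^{c}\big) = \big(f^{-1}(U)\big]^{c},
\]
which is the same as $f^{-1}((U]) = (f^{-1}(U)]$ for every $U \in D(X_{2})$.

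The inclusion $(f^{-1}(U)] \subseteq f^{-1}((U])$ uses only that $f$ is order-preserving. If $x \leq z$ with $f(z) \in U$, then $f(x) \leq f(z)$, so $f(x) \in (U]$. (This also follows from $h_{f}(\neg U) \subseteq \neg h_{f}(U)$, a consequence of the semi-homomorphism inequality with $V = \emptyset$.)

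The reverse inclusion is where condition (\ref{SH}) enters, and it is the only substantive step. Let $x \in X_{1}$ with $f(x) \in (U]$, and pick $y \in U$ with $f(x) \leq y$. By (\ref{SH}) there is $z \in X_{1}$ with $x \leq z$ and $y \leq f(z)$. Since $U \in D(X_{2})$ is an upset (its complement is open, and open sets are downsets in the specialization order used here), $y \in U$ gives $f(z) \in U$. So $z \in f^{-1}(U)$ and $x \in (f^{-1}(U)]$. Taking complements yields $h_{f}(\neg U) = \neg h_{f}(U)$.

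The one point I would check carefully is that elements of $D(X)$ are upsets with respect to the order used in (\ref{SH}). This is the convention inherited from \cite{CM1}, where $\varphi(a)$ is an upset of $\mathrm{X}(A)$ under inclusion. Once that is confirmed, the proof is complete; the Stonean hypothesis on the spaces is needed only so that domain and codomain lie in $\mathrm{SHil}_{0}^{\vee}$, by Theorem \ref{charact dual Stone algebra}.
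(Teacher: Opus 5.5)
Your proof is correct and is essentially the paper's argument: the inclusion $h_{f}(\neg U) \subseteq \neg h_{f}(U)$ comes from the $\vee0$-semi-homomorphism property (equivalently, monotonicity of $f$), and the reverse inclusion follows from condition (\ref{SH}) together with $U$ being an upset. The paper derives that reverse inclusion by contradiction with $[x) \cap f^{-1}(U) = \emptyset$, while you prove it directly as $f^{-1}((U]) = (f^{-1}(U)]$, and the fact you flag (elements of $D(X)$ are upsets) is exactly what the paper also uses without further comment.
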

\begin{proof} 
Since $f$ is $\neg$-map, by the results given in \cite{CM4}, $h_{f}$ is a $\vee0$-semi-homomorphism. It only remains to prove that $h_{f}(\neg U)=\neg h_{f}(U)$, for all $U\in D(X_2)$. Since $h_{f}$ is a $\vee0$-semi-homomorphism, 
\[
h_{f}(\neg U) = h_{f} (U \Rightarrow \emptyset) \subseteq h_{f}( U) \Rightarrow h_{f}(\emptyset) = \neg h_{f}(U).
\]
Let $x \in \neg h_{f}(U) = \neg f^{-1}(U)$. So, $[x) \cap f^{-1}(U) = \emptyset$. We prove $x \in h_{f}(\neg U) = f^{-1}(\neg U)$  by showing that $f(x) \in U \Rightarrow \emptyset$, that is, $[f(x)) \cap U = \emptyset$. In contrast, suppose that there is $y \in X_2$ such that $f(x) \leq y$ and $y \in U$. So, there exists $z \in X_1$ such that $x \leq z$ and $y \leq f(z)$. As $y \in U$ and $U$ is an upset of $X_2$ we have $f(z) \in U$. Thus, there exists $z \in X_1$ such that $z \in [x)$ and $z \in f^{-1}(U)$, which is impossible. 
\end{proof}

We conclude with the following result.

\begin{theorem}
The categories ${\mathcal{NSH}}_{0}^{\vee}$ and $\mathcal{NMS}_{0}^{\vee}$ are dually equivalent. 
\end{theorem}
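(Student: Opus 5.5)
The plan is to obtain the duality by restricting the dual equivalence between $\mathcal{SH}_{0}^{\vee}$ and $\mathcal{SMS}_{0}^{\vee}$ from \cite{CM4}. On objects we use the contravariant functors ${\bf{A}} \mapsto \langle \mathrm{X}(A), \mathcal{T}_{\mathcal{K}_{A}} \rangle$ and $\langle X, \mathcal{T}_{\mathcal{K}} \rangle \mapsto \langle D(X), \cup, \Rightarrow, \emptyset, X \rangle$. On arrows we use $h \mapsto f_h = h^{-1}(\cdot)$ and $f \mapsto h_f = f^{-1}(\cdot)$. We must check that these functors land in the right subcategories and that the natural isomorphisms of \cite{CM4} are morphisms there.

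Well-definedness on objects is exactly Theorem \ref{charact dual space} and Theorem \ref{charact dual Stone algebra}. For arrows, the two preceding theorems show that $f_h$ is a $\neg$-map whenever $h$ is a $\neg$-morphism, and that $h_f$ is a $\neg$-morphism whenever $f$ is a $\neg$-map. Functoriality, meaning preservation of identities and composition, is inherited from \cite{CM4}, since inverse images respect both. Thus we get contravariant functors $\mathcal{NSH}_{0}^{\vee} \to \mathcal{NMS}_{0}^{\vee}$ and back.

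For the natural isomorphisms we take $\varphi_{\bf{A}} \colon {\bf{A}} \to D(\mathrm{X}(A))$ and $\epsilon_X \colon X \to \mathrm{X}(D(X))$. By \cite{CM4}, $\varphi_{\bf{A}}$ is an isomorphism of $H_0^{\vee}$-algebras and $\epsilon_X$ is a homeomorphism and order isomorphism. Naturality squares commute already in the larger categories. The remaining step is to check that these isomorphisms and their inverses are arrows in the restricted categories.

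\begin{itemize}
\item $\varphi_{\bf{A}}$ is a $\neg$-morphism because $\varphi(\neg a) = \varphi(a) \Rightarrow \varphi(0) = \neg \varphi(a)$, as $\varphi$ preserves $\to$ and $0$. The same holds for its inverse. Equivalently, any algebra isomorphism preserves $\neg$.
\item $\epsilon_X$ is a spectral map. Condition (\ref{SH}) holds trivially for an order isomorphism: given $\epsilon(x) \leq y$, take $z = \epsilon^{-1}(y)$, so $x \leq z$ and $y \leq \epsilon(z)$. The same argument applies to $\epsilon^{-1}$.
\end{itemize}

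The main point to verify carefully is that $\epsilon$ and its inverse are order isomorphisms, so that (\ref{SH}) applies. This follows because in an $H_0^{\vee}$-space the order is the specialization order determined by the basis, and $\epsilon$ is a homeomorphism, so it preserves and reflects that order.
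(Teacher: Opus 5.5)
Your proposal is correct and takes the same route as the paper: the paper gives no separate argument and concludes the theorem from Theorems \ref{charact dual space} and \ref{charact dual Stone algebra}, the two results on $f_h$ and $h_f$, and the duality of \cite{CM4} between $\mathcal{SH}_{0}^{\vee}$ and $\mathcal{SMS}_{0}^{\vee}$. You also check explicitly that $\varphi_{\mathbf{A}}$, $\epsilon_X$ and their inverses are $\neg$-morphisms and $\neg$-maps respectively, which is needed because the restricted categories are not full; the paper leaves this implicit, and your verification is sound.
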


\section{$\alpha$-ideals in Stonean Hilbert algebras} \label{sec6}

In \cite{C0} and \cite{Gaitan0} the structure of $\alpha$-deductive systems and $\alpha$-filters in bounded Hilbert algebras with infimum was studied, respectively. In this section, following the results given in \cite{CM1}, we investigate the family of $\alpha$-ideals of a Stonean Hilbert algebra, a notion to be defined in what follows, and their connection with certain subsets of the associated dual space.

\begin{definition}
Let ${\bf{A}} \in \mathrm{Hil}_{0}^{\vee}$. Let $I \in \mathrm{Id}(A)$. We say that $I$ is an $\alpha$-ideal if $\neg \neg a \in I$, whenever $a \in I$.
\end{definition}

Denote by $\mathrm{Id}_{\alpha}(A)$ the set of all $\alpha$-ideals of ${\bf{A}}$. Since $a \leq \neg \neg a$ for all $a \in A$, we have that an ideal $I$ of $A$ is an $\alpha$-ideal if and only if $\forall a \in A$, $a \in I$ iff $\neg \neg a \in I$.

\begin{lemma} \cite{C0} \label{lema_alpha_ideal generated}
Let ${\bf{A}} \in \mathrm{SHil}_{0}^{\vee}$. Let $I \in \mathrm{Id}(A)$. Then 
\[
I^{\alpha} = \{ a \in A \colon \exists x \in I (a \leq \neg \neg x) \}
\]
is the smallest $\alpha$-ideal containing $I$.
\end{lemma}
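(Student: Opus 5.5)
We verify three things in turn: $I^{\alpha}$ is an ideal, it is an $\alpha$-ideal containing $I$, and it is contained in every $\alpha$-ideal containing $I$.

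\emph{$I^{\alpha}$ is an ideal.} It is a downset by construction: if $b\le a\le\neg\neg x$ with $x\in I$, then $b\in I^{\alpha}$. Since $0\in I$ and $0\le\neg\neg 0$, we have $0\in I^{\alpha}$, so it is non-empty. For closure under joins, let $a\le\neg\neg x$ and $b\le\neg\neg y$ with $x,y\in I$. Then $a\vee b\le\neg\neg x\vee\neg\neg y$. By Proposition \ref{generaliz lemma sergio}(1), which is where the Stonean hypothesis is used, $\neg\neg x\vee\neg\neg y=\neg\neg(x\vee y)$. Because $I$ is an ideal, $x\vee y\in I$, and therefore $a\vee b\in I^{\alpha}$.

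\emph{$I^{\alpha}$ is an $\alpha$-ideal containing $I$.} For $x\in I$ we have $x\le\neg\neg x$ by Lemma \ref{lemma_pri}(1), so $I\subseteq I^{\alpha}$. Now take $a\in I^{\alpha}$, say $a\le\neg\neg x$ with $x\in I$. Applying Lemma \ref{lemma_pri}(2) twice gives $\neg\neg a\le\neg\neg\neg\neg x$. By Lemma \ref{lemma_pri}(3), $\neg\neg\neg\neg x=\neg\neg x$. Hence $\neg\neg a\le\neg\neg x$, so $\neg\neg a\in I^{\alpha}$.

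\emph{Minimality.} Let $J$ be an $\alpha$-ideal with $I\subseteq J$, and let $a\in I^{\alpha}$ with $a\le\neg\neg x$ for some $x\in I$. Then $x\in J$, and since $J$ is an $\alpha$-ideal, $\neg\neg x\in J$. As $J$ is a downset, $a\in J$. Thus $I^{\alpha}\subseteq J$.

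The only step that is not a routine order manipulation is closure of $I^{\alpha}$ under joins. It needs the identity $\neg\neg x\vee\neg\neg y=\neg\neg(x\vee y)$, and this is supplied by Proposition \ref{generaliz lemma sergio}(1).
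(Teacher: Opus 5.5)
The paper gives no proof of this lemma; it simply quotes it from \cite{C0}, so there is nothing to compare routes against. Your argument is a correct, self-contained verification. You check all of the following properly: the downset property, $0\in I^{\alpha}$, closure under joins, $I\subseteq I^{\alpha}$, the $\alpha$-condition via $\neg\neg\neg\neg x=\neg\neg x$, and minimality.

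One correction to your commentary: the join-closure step does not need the Stonean hypothesis.

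\begin{itemize}
\item You only use the inequality $\neg\neg x\vee\neg\neg y\le\neg\neg(x\vee y)$.
\item This holds in every $H_0^{\vee}$-algebra. From $x,y\le x\vee y$, two applications of Lemma \ref{lemma_pri}(2) give $\neg\neg x,\neg\neg y\le\neg\neg(x\vee y)$.
\item So if you replace the appeal to Proposition \ref{generaliz lemma sergio}(1) with this monotonicity remark, your proof shows the lemma holds for arbitrary bounded Hilbert algebras with supremum.
\end{itemize}

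The reverse inequality $\neg\neg(x\vee y)\le\neg\neg x\vee\neg\neg y$ is the direction that genuinely needs Stone's identity. In this paper it is used later, not here. It appears in the surjectivity part of the last theorem, where one passes from $\neg\neg a\le\neg\neg(b_1\vee\cdots\vee b_n)$ to some $\neg\neg b_i$ lying in a prime filter.
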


We define the following operations on $\mathrm{Id}_{\alpha}(A)$:
\begin{eqnarray*}
I \sqcap J                     & := & I \cap J, \\
I \sqcup J                     & := & \{ x \in A \colon \exists i \in I \exists j \in J (x \leq i \vee j) \}, \\
I \twoheadrightarrow J & := & \{ x \in A \colon \forall i \in I \exists j \in J (x \leq i \to j) \}. 
\end{eqnarray*}
Let consider the structure $\mathrm{Id}_{\alpha}({\bf{A}}) = \langle \mathrm{Id}_{\alpha}(A), \sqcup, \sqcap, \twoheadrightarrow, \{0\}, A \rangle$.

\begin{theorem} \cite{C0}
Let ${\bf{A}} \in \mathrm{SHil}_{0}^{\vee}$. Then $\mathrm{Id}_{\alpha}({\bf{A}})$ is a Heyting algebra. 
\end{theorem}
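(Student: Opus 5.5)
The plan is to prove directly that $\mathrm{Id}_{\alpha}(\mathbf{A})$ is a bounded lattice under $\sqcap,\sqcup$ and that $\twoheadrightarrow$ is the relative pseudocomplement. We would not use the duality for this.

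\textbf{Closure of the operations.} First we check that each operation returns an $\alpha$-ideal.
\begin{itemize}
\item $\{0\}$ is an $\alpha$-ideal because $\neg\neg 0=0$. The set $A$ is trivially one.
\item $I\cap J$ is clearly an $\alpha$-ideal.
\item $I\sqcup J$ is the ideal generated by $I\cup J$, since ideals are closed under finite joins. It is an $\alpha$-ideal because of Proposition \ref{generaliz lemma sergio}(1): if $x\le i\vee j$, then $\neg\neg x\le \neg\neg(i\vee j)=\neg\neg i\vee\neg\neg j$, and here $\neg\neg i\in I$ and $\neg\neg j\in J$.
\end{itemize}
With these, $I\sqcup J$ is the least upper bound and $I\cap J$ the greatest lower bound in $\mathrm{Id}_{\alpha}(A)$. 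So $\mathrm{Id}_{\alpha}(\mathbf{A})$ is a bounded lattice. Distributivity will follow once the residuation law is in place.

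\textbf{$I\twoheadrightarrow J$ is an $\alpha$-ideal.}
\begin{itemize}
\item It is a downset.
\item It contains $0$: given $i\in I$, take $j=0$, since $0\le i\to 0$.
\item Closure under $\vee$ is the step we expect to be the main obstacle. Suppose $x\le i\to j_1$ and $y\le i\to j_2$. Take $j=j_1\vee j_2\in J$. Then $i\to j_1\le i\to j$ and $i\to j_2\le i\to j$, hence $x\vee y\le i\to j$.
\item Closure under $\neg\neg$: from $x\le i\to j$, Lemma \ref{lemma_pri}(6) gives $\neg\neg x\le \neg\neg(i\to j)\le \neg\neg i\to\neg\neg j$. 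This bounds $\neg\neg x$ by $\neg\neg i\to\neg\neg j$, not by something of the form $i\to j'$. To fix this we use $i\le\neg\neg i$, so $\neg\neg i\to \neg\neg j\le i\to\neg\neg j$, and $\neg\neg j\in J$ because $J$ is an $\alpha$-ideal. Hence $\neg\neg x\le i\to\neg\neg j$, which is of the required form.
\end{itemize}

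\textbf{Residuation.} We show $K\cap I\subseteq J$ iff $K\subseteq I\twoheadrightarrow J$.

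($\Leftarrow$) Let $x\in K\cap I$. Taking $i=x$, there is $j\in J$ with $x\le x\to j$. Then $x\to(x\to j)=1$, so $x\to j=1$, i.e.\ $x\le j$, and therefore $x\in J$.

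($\Rightarrow$) Let $k\in K$ and $i\in I$. We need some $j\in J$ with $k\le i\to j$. This is the key step. The natural candidate is a lower bound of $k$ and $i$ lying in $K\cap I$, but meets need not exist. So we first try $j=\neg\neg\cdot$ of some element. The cleaner route would be through the dual space, using Theorem \ref{charact dual space} and \cite{CM1}: identify $\alpha$-ideals with certain open sets of $\mathrm{X}(A)$ and read $\twoheadrightarrow$ as an interior of $U^{c}\cup V$. Algebraically, the plan is to use Lemma \ref{lemaSergio}(4) and the Boolean algebra $R(\mathbf{A})$. Put $j=\neg\neg i\barwedge \neg\neg k=\neg\neg i\wedge\neg\neg k$ (Lemma \ref{lemainfimo}). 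This element lies below $\neg\neg k\in K$ and below $\neg\neg i\in I$. It is an ideal element, so it lies in $K\cap I\subseteq J$. Then we verify $k\le i\to j$ inside $R(\mathbf{A})$ via Remark \ref{remark1}, after passing to $\neg\neg$. This requires $i\to j$ to be regular when $j$ is regular, which holds since $i\to\neg\neg m=\neg\neg(i\to m)$-type identities hold in bounded Hilbert algebras. This gives the residuation.

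\textbf{Conclusion.} Residuation makes $\mathrm{Id}_{\alpha}(\mathbf{A})$ a Heyting algebra, and distributivity follows automatically.
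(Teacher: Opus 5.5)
Your argument is correct. It cannot follow the paper's proof, because the paper gives none: the statement is quoted from \cite{C0}. The paper then relies on it in Section~\ref{sec6}. There the Heyting law is checked set-theoretically on $\mathrm{Od}_{\alpha}(X)$, and $\beta$ is shown to be a Heyting isomorphism. That last step takes for granted that $I\twoheadrightarrow J$ is again an $\alpha$-ideal, which is exactly what your closure check supplies.

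So your direct algebraic verification is a self-contained replacement for the citation. The paper's topological picture makes residuation a triviality about subsets, but it needs the whole duality and still rests on the cited fact.

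The one step you leave loose is the final inequality in ($\Rightarrow$), and it needs no regularity of $i\to j$. Apply Theorem~\ref{T1} with $a=\neg k$, $b=\neg i$, and use that $\to$ is antitone in its first argument together with $i\le\neg\neg i$:
\[
k\le\neg\neg k\le\neg\neg i\to(\neg\neg i\wedge\neg\neg k)\le i\to(\neg\neg i\wedge\neg\neg k).
\]

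In fact this direction of residuation holds in every $H_{0}^{\vee}$-algebra. Write $j=\neg(\neg i\vee\neg k)$. Lemma~\ref{lemma_pri}(5) and the identity $(a\vee b)\to c=(a\to c)\wedge(b\to c)$ from \cite{BG} give $i\to j=\neg k\to\neg i=i\to\neg\neg k\ge k$. So the Stonean identity is needed only to make $I\sqcup J$ an $\alpha$-ideal.

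Closure of $I\twoheadrightarrow J$ under $\vee$ is routine, not the main obstacle. The real content is the choice of $j$ in ($\Rightarrow$), which is where the $\alpha$-property of $I$ and $K$ is used.
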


The following concept was introduced in \cite{CM1}. 

\begin{definition}
Let $\langle X, \mathcal{T}_{\mathcal{K}} \rangle$ be an $H_{0}^{\vee}$-space. Let $Y$ be a subset of $X$. We say that $Y$ is an open directed subset if there exists a subset $B \subseteq D(X)$ such that 
\[
Y = \bigcup \{ U \colon U \in B \}.
\]
\end{definition}

Let $\langle X, \mathcal{T}_{\mathcal{K}} \rangle$ be an $H_{0}^{\vee}$-space. Denote by $\mathrm{Od}(X)$ the set of all open directed subsets of $\langle X, \mathcal{T}_{\mathcal{K}} \rangle$. If $Y \subseteq X$, we consider $I(Y) = \{ U \in D(X) \colon U \subseteq Y \}$. Note that $I(Y)$ is an ideal of $D(X)$ and $Y$ is open directed if and only if $Y= \bigcup I(Y)$ (\cite{CM1}).

We can define on $\mathrm{Od}(X)$ the following operations:
\begin{eqnarray*}
Y \barwedge Z            & := & \bigcup I(Y \cap Z), \\
Y \rightarrowtail Z       & := & \bigcup \{ U \in D(X) \colon U \subseteq Y^{c} \cup Z \}. 
\end{eqnarray*}
Note that $Y \rightarrowtail Z = \bigcup I(Y^{c} \cup Z)$ and the structure $\langle \mathrm{Od}(X), \cup, \barwedge, \emptyset, X \rangle$ is a bounded lattice. The following results were developed in \cite{CM1}.

\begin{theorem}
Let $\langle X, \mathcal{T}_{\mathcal{K}} \rangle$ be an $H_{0}^{\vee}$-space. Let $Y, Z \in \mathrm{Od}(X)$. Then:
\begin{enumerate}
\item $Y \subseteq Z$ if and only if $Y \rightarrowtail Z = X$,
\item $Z \subseteq Y \rightarrowtail Z$,
\item If $Y \rightarrowtail Z = X$ and $Z \rightarrowtail Y = X$, then $Y=Z$.
\end{enumerate}
\end{theorem}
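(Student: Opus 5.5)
The plan is to reduce everything to the description of $Y \rightarrowtail Z$ as $\bigcup I(Y^{c} \cup Z)$, where $I(W)$ is an ideal of $D(X)$. Two facts will be used throughout: $X \in D(X)$ by (H3), since the complement of $X$ is $\emptyset$, which belongs to $\mathcal{K}$ as an empty union of basic sets; and $Y = \bigcup I(Y)$ for every $Y \in \mathrm{Od}(X)$. I will prove item (2) first, then (1), and deduce (3) from (1).

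For item (2), let $Z \in \mathrm{Od}(X)$. Each $U \in I(Z)$ is an element of $D(X)$ with $U \subseteq Z \subseteq Y^{c} \cup Z$. Hence $I(Z) \subseteq I(Y^{c} \cup Z)$, and therefore
\[
Z = \bigcup I(Z) \subseteq \bigcup I(Y^{c} \cup Z) = Y \rightarrowtail Z .
\]

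For item (1), suppose first that $Y \subseteq Z$. Then $Y^{c} \cup Z = X$, and since $X \in D(X)$ we get $X \in I(Y^{c} \cup Z)$, so $Y \rightarrowtail Z = X$. Conversely, suppose $Y \rightarrowtail Z = X$ and let $y \in Y$. Pick $U \in D(X)$ with $y \in U \subseteq Y^{c} \cup Z$. Since $y \in Y$, we have $y \notin Y^{c}$, so $y \in Z$. (At this step it might look tempting to use compactness, but membership is pointwise, so it is unnecessary.) Item (3) then follows at once: by (1) the two hypotheses give $Y \subseteq Z$ and $Z \subseteq Y$, so $Y = Z$.

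The only step needing any care is checking that $X \in D(X)$, which holds because $\emptyset$ is in the base $\mathcal{K}$ (or is included by convention). Everything else is a direct unwinding of the definitions.
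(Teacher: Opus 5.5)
Your argument is correct. It is the natural direct unwinding of $Y\rightarrowtail Z=\bigcup I(Y^{c}\cup Z)$, and that definitional check is all that is needed; the paper gives no proof of its own here and simply attributes the result to \cite{CM1}.

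The one thing to repair is your justification of $X\in D(X)$, which is misattributed. Condition (H3) says $X\in\mathcal{K}$, and that gives $\emptyset=X^{c}\in D(X)$, not $X\in D(X)$. Likewise, $\emptyset$ being an empty union of basic sets only shows that $\emptyset$ is open, not that it belongs to the base $\mathcal{K}$. Appealing to a convention is unnecessary, because the fact follows from the axioms via (H2). Take $U=V$ (for instance $U=V=X$, available by (H3)); then $U\Rightarrow U=(U\cap U^{c}]^{c}=(\emptyset]^{c}=X$, so $X\in D(X)$. Equivalently, $X$ is the top element of the algebra $\langle D(X),\cup,\Rightarrow,\emptyset,X\rangle$.

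With this fixed, (1)--(3) go through exactly as you wrote. As a small remark, the converse half of (1) is just the inclusion $Y\rightarrowtail Z\subseteq Y^{c}\cup Z$, which holds for arbitrary subsets. So the hypothesis $Y,Z\in\mathrm{Od}(X)$ is actually used only in (2).
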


\begin{theorem} \label{theo_beta}
Let ${\bf{A}} \in \mathrm{Hil}_{0}^{\vee}$. Then the map $\beta \colon \mathrm{Id}(A) \to \mathrm{Od}(\mathrm{X}(A))$ defined by 
\begin{equation} \label{def_beta}
\beta (I) = \{ x \in \mathrm{X}(A) \colon x \cap I \neq \emptyset \}= \bigcup \{ \varphi(a) \colon a \in I \}
\end{equation}
is a lattice-isomorphism that preserves implication.
\end{theorem}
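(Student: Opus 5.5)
The plan is to treat $\beta$ as the restriction of the Stone-type correspondence $\varphi$ to ideals, and to check the required properties one at a time.

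\emph{Well-definedness.} The two descriptions of $\beta(I)$ in (\ref{def_beta}) agree, since $x\cap I\neq\emptyset$ exactly when $x\in\varphi(a)$ for some $a\in I$. The second description shows that $\beta(I)$ is a union of members of $D(\mathrm{X}(A))$, so $\beta(I)\in\mathrm{Od}(\mathrm{X}(A))$. Monotonicity of $\beta$ is immediate.

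\emph{Bijectivity.} I will show that $I\mapsto\beta(I)$ and $Y\mapsto\varphi^{-1}(I(Y))=\{a\colon\varphi(a)\subseteq Y\}$ are mutually inverse.
\begin{itemize}
\item Let $Y$ be open directed. Then $Y=\bigcup I(Y)$, and $\varphi^{-1}(I(Y))$ is an ideal because $I(Y)$ is an ideal of $D(X)$ and $\varphi$ is an isomorphism. It follows that $\beta(\varphi^{-1}(I(Y)))=\bigcup I(Y)=Y$.
\item Conversely, we need $\varphi(a)\subseteq\beta(I)$ to imply $a\in I$; this is the main step. Suppose $a\notin I$. Then $[a)\cap I=\emptyset$, where $[a)$ is the implicative filter generated by $a$ (if some $b\in I$ had $b\geq a$, then $a\in I$ since $I$ is a downset). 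By Theorem \ref{separacion} there is $x\in\mathrm{X}(A)$ with $a\in x$ and $x\cap I=\emptyset$. Hence $x\in\varphi(a)\setminus\beta(I)$.
\end{itemize}
So $\beta$ is a bijection and, by the same argument, an order-embedding: $I\subseteq J$ iff $\beta(I)\subseteq\beta(J)$. An order isomorphism between lattices is a lattice isomorphism.

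\emph{Lattice operations.} These can also be checked directly. For joins, $\beta$ of the ideal generated by $I\cup J$ is $\beta(I)\cup\beta(J)$, using primeness of $x$: if $a\leq i\vee j$ lies in $x$, then $i\vee j\in x$, so $i\in x$ or $j\in x$. For meets, $\beta(I\cap J)=\bigcup I(\beta(I)\cap\beta(J))$, which again follows from the correspondence $\varphi(a)\subseteq\beta(I)\iff a\in I$.

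\emph{Implication.} We need $\beta(I\twoheadrightarrow J)=\beta(I)\rightarrowtail\beta(J)$, where $I\twoheadrightarrow J=\{a\colon\forall i\in I\,\exists j\in J\ (a\leq i\to j)\}$ is defined as above (one should also check it is an ideal). By the correspondence, it suffices to show
\[
\varphi(a)\subseteq\beta(I)^c\cup\beta(J)\iff a\in I\twoheadrightarrow J .
\]
\begin{itemize}
\item ($\Leftarrow$) Let $a\in x$ and $i\in x\cap I$. Choose $j\in J$ with $a\leq i\to j$. Then $i\to j\in x$, so $j\in x$ and $x\in\beta(J)$.
\item ($\Rightarrow$) Fix $i\in I$. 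Then $\varphi(a)\cap\varphi(i)=\varphi$ of the filter generated by $\{a,i\}$, and it is contained in $\beta(J)$. Argue by contradiction: if no $j\in J$ satisfies $a\leq i\to j$, then $\mathrm{Fig}(\{a,i\})\cap J=\emptyset$, because $a\to(i\to j)=1$ means $j\in\mathrm{Fig}(\{a,i\})$. Theorem \ref{separacion} then yields $x$ containing $a$ and $i$ with $x\cap J=\emptyset$, a contradiction.
\end{itemize}

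The main obstacle is this last step. It relies on $J$ being a downset, so that disjointness of $\mathrm{Fig}(\{a,i\})$ from $J$ is exactly the needed condition, and on identifying the elements of $\mathrm{Fig}(\{a,i\})$ as those $j$ with $(a,i;j)=1$.
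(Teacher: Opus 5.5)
Your proof is correct. The paper gives no argument for this statement: it is quoted from \cite{CM1} (``The following results were developed in \cite{CM1}''). So there is no in-paper route to compare with, and what you have written is a self-contained verification along the natural lines. Everything rests on the correspondence $\varphi(a)\subseteq\beta(I)\iff a\in I$, which you obtain from Theorem~\ref{separacion} applied to $[a)$ and $I$.
\begin{itemize}
\item This correspondence gives injectivity and order-reflection.
\item Surjectivity follows from $Y=\bigcup I(Y)$.
\item Preservation of $\sqcup$, $\sqcap$ and $\twoheadrightarrow$ reduces to determining which $\varphi(a)$ lie inside the target set. For the implication this needs a second use of Theorem~\ref{separacion}, for $\mathrm{Fig}(\{a,i\})$ and $J$.
\end{itemize}

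There are three small presentational points.
\begin{itemize}
\item In the implication step, the inclusion you actually need is that every $j\in\mathrm{Fig}(\{a,i\})$ satisfies $a\to(i\to j)=1$. Your ``because'' clause cites the converse, which is the trivial direction. The needed direction follows from the identities $a\to(a\to b)=a\to b$, $a\to(i\to b)=i\to(a\to b)$ and $b\le i\to b$. These collapse any word $(a_1,\ldots,a_n;j)$ with $a_k\in\{a,i\}$.
\item The equivalence between $\mathrm{Fig}(\{a,i\})\cap J=\emptyset$ and the nonexistence of $j\in J$ with $a\le i\to j$ holds for an arbitrary subset $J$. What requires $J$ to be an ideal (in particular a downset) is the appeal to Theorem~\ref{separacion}.
\item You do not need to check separately that $I\twoheadrightarrow J$ is an ideal. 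Your equivalence shows $I\twoheadrightarrow J=\{a\colon\varphi(a)\subseteq\beta(I)^c\cup\beta(J)\}$, and $\{a\colon\varphi(a)\subseteq W\}$ is an ideal for every $W\subseteq\mathrm{X}(A)$, since $\varphi(0)=\emptyset$, $\varphi$ is monotone, and $\varphi(a\vee b)=\varphi(a)\cup\varphi(b)$.
\end{itemize}
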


Consider the following special open directed subsets in a Stonean $H_{0}^{\vee}$-space.

\begin{definition}
Let $\langle X, \mathcal{T}_{\mathcal{K}} \rangle$ be a Stonean $H_{0}^{\vee}$-space and let $Y\subseteq X$. We say that $Y$ is an open $\alpha$-directed subset if there exists a subset $B \subseteq D(X)$ such that 
\[
Y = \bigcup \{ (U] \colon U \in B \}.
\]
\end{definition}

We denote by $\mathrm{Od}_{\alpha}(X)$ the set of all open directed subsets of $\langle X, \mathcal{T}_{\mathcal{K}} \rangle$.

\begin{lemma} \label{lema_directed}
Let $\langle X, \mathcal{T}_{\mathcal{K}} \rangle$ be a Stonean $H_{0}^{\vee}$-space and let $Y\subseteq X$. Then, $Y\in \mathrm{Od}_{\alpha}(X)$  if and only if 
$Y = \bigcup \{ (U] \colon (U] \subseteq Y \text{ and } U \in D(X) \}$.
\end{lemma}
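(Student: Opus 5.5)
We prove the two implications separately; both are essentially unwinding the definition of an open $\alpha$-directed subset.

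For the backward direction, suppose $Y = \bigcup \{ (U] \colon (U] \subseteq Y \text{ and } U \in D(X) \}$. Take
\[
B = \{ U \in D(X) \colon (U] \subseteq Y \}.
\]
Then $B \subseteq D(X)$ and $Y = \bigcup \{ (U] \colon U \in B \}$, so $Y \in \mathrm{Od}_{\alpha}(X)$ by definition. This is immediate.

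For the forward direction, let $Y \in \mathrm{Od}_{\alpha}(X)$, so $Y = \bigcup \{ (U] \colon U \in B \}$ for some $B \subseteq D(X)$. Put
\[
Z = \bigcup \{ (U] \colon (U] \subseteq Y,\ U \in D(X) \}.
\]
We show $Z = Y$ by double inclusion.
\begin{itemize}
\item $Z \subseteq Y$ holds trivially, since every set in the union defining $Z$ is contained in $Y$.
\item For $Y \subseteq Z$, take $x \in Y$. Then $x \in (U]$ for some $U \in B$. Since $(U] \subseteq Y$ and $U \in D(X)$, the set $(U]$ is one of the sets in the union defining $Z$, so $x \in Z$.
\end{itemize}

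The argument uses neither the Stonean condition ($(U] \in D(X)$) nor any topology. No step is a genuine obstacle. The only point needing care is that the union in the statement is indexed by $U \in D(X)$ with the side condition $(U] \subseteq Y$, and that the index set $B$ from the definition sits inside this index set; once that is observed, both directions are one line each.
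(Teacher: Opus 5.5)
Your proof is correct and is essentially the paper's argument: in the forward direction both sandwich $Y$ between $\bigcup\{(U] \colon U \in B\}$ and $\bigcup\{(U] \colon (U] \subseteq Y,\ U \in D(X)\}$, using that $B$ sits inside the latter index set, and the converse is immediate. You are also right that the Stonean condition is never needed; the paper mentions $(U] \in D(X)$ in passing, but its argument does not depend on it.
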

\begin{proof}
Suppose that $Y\in \mathrm{Od}_{\alpha}(X)$. So, there exists $B \subseteq D(X)$ such that $Y = \bigcup \{ (U] \colon U \in B \}$. So, $(U] \subseteq Y$, for all $U \in B$. Note that by Definition \ref{Stone_space} we have $(U] \in D(X)$, for all $U \in D(X)$. Then  
\[
Y = \bigcup \{ (U] \colon U \in B \} \subseteq \bigcup \{ (U] \colon (U] \subseteq Y \text{ and } U \in D(X) \} \subseteq Y, 
\]
i.e., $Y = \bigcup \{ (U] \colon (U] \subseteq Y \text{ and } U \in D(X) \}$. The reciprocal es immediate.
\end{proof}

\begin{theorem}
Let $\langle X, \mathcal{T}_{\mathcal{K}} \rangle$ be a Stonean $H_{0}^{\vee}$-space. Then $\langle \mathrm{Od}_{\alpha}(X), \cup, \barwedge, \emptyset, X \rangle$ is a bounded sublattice of $\langle \mathrm{Od}(X), \cup, \barwedge, \emptyset, X \rangle$. Moreover, $\langle \mathrm{Od}_{\alpha}(X), \cup, \barwedge, \rightarrowtail, \emptyset, X \rangle$ is a Heyting algebra where
\[
Y \rightarrowtail Z = \bigcup \{ (U] \colon (U] \subseteq Y^{c} \cup Z \text{ and } U \in D(X) \},
\]
for all $Y, Z \in \mathrm{Od}_{\alpha}(X)$.
\end{theorem}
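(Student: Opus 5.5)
The plan is to work directly in the space, using the elements $(U]$ with $U\in D(X)$ as the building blocks. By Definition \ref{Stone_space} each $(U]$ lies in $D(X)$. The proof of Theorem \ref{charact dual space} shows (through $\varphi(\neg\neg a)=(\varphi(a)]$ and the isomorphism $\epsilon$) that $(U]=\neg\neg U$ in $D(X)$. The first step is to record the basic facts about the operation $U\mapsto (U]$ on $D(X)$:
\begin{itemize}
\item $(U\cup V]=(U]\cup(V]$, which holds in any poset;
\item $((U]]=(U]$;
\item $(U]\cap(V]=(U\cap V]$, which is Lemma \ref{lemaSergio}(3) transported to $X$ via $\epsilon$, applied to the upsets $U,V$.
\end{itemize}
Note that $U\cap V$ need not lie in $D(X)$. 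However, $(U]\cap(V]$ is the intersection of two members of $D(X)$, each of the form $\neg W$, and by \S3 the meet $\neg W_1\wedge\neg W_2=\neg(W_1\cup W_2)$ exists in $D(X)$. I expect it to be exactly $(U]\cap(V]$. So $(U]\cap(V]=(U'']$ for $U''=(U]\cap(V]\in D(X)$, using the idempotence of $(\cdot]$ on regular elements.

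Next come the sublattice claims. Clearly $\emptyset=(\emptyset]$ and $X=(X]$ lie in $\mathrm{Od}_\alpha(X)$. A union of two unions of sets $(U]$ is again such a union, so $\cup$ is preserved. For $\barwedge$, take $Y=\bigcup_{U\in B}(U]$ and $Z=\bigcup_{V\in C}(V]$. Then
\[
Y\cap Z=\bigcup_{U\in B,\,V\in C}\bigl((U]\cap(V]\bigr),
\]
and each piece is of the form $(W]$ with $W\in D(X)$, so $Y\cap Z$ is itself open $\alpha$-directed. In particular $Y\cap Z$ is open directed, hence $Y\barwedge Z=\bigcup I(Y\cap Z)=Y\cap Z$. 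This settles the sublattice part. Distributivity then follows, because the meet is plain intersection and the join is union.

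For the Heyting structure, define $Y\rightarrowtail Z$ by the displayed formula. By Lemma \ref{lema_directed} this is the largest element of $\mathrm{Od}_\alpha(X)$ contained in $Y^c\cup Z$: any $(U]$ contained in $Y^c\cup Z$ contributes to the union, and the union is contained in $Y^c\cup Z$. The residuation law then reads: for $W\in\mathrm{Od}_\alpha(X)$,
\[
W\cap Y\subseteq Z \iff W\subseteq Y^c\cup Z \iff W\subseteq Y\rightarrowtail Z.
\]
The first equivalence is set-theoretic. The second holds because $W$ is a union of sets $(U]$, each contained in $Y^c\cup Z$, and conversely $Y\rightarrowtail Z\subseteq Y^c\cup Z$. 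Combined with $W\barwedge Y=W\cap Y$ from the previous step, this gives that $\langle \mathrm{Od}_\alpha(X),\cup,\barwedge,\rightarrowtail,\emptyset,X\rangle$ is a Heyting algebra.

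The main obstacle is the meet step, namely showing that $(U]\cap(V]$ is again of the form $(W]$ with $W\in D(X)$. I would try to do this directly via Lemma \ref{lemaSergio}(3) and the Stonean condition, with $W=(U]\cap(V]$. To see that $W\in D(X)$, use $\neg\neg U\cap\neg\neg V=\neg(\neg U\cup\neg V)=\neg(U\Rightarrow\neg V)$ in $D(X)$, together with Lemma \ref{lemaSergio}(4); this yields an element of $D(X)$ that is moreover a down-closure image.
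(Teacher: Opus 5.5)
Your proposal is correct. It shares the paper's skeleton: closure of $\mathrm{Od}_{\alpha}(X)$ under the operations, then the law $Y \barwedge Z \subseteq W \iff Y \subseteq Z \rightarrowtail W$. Where it differs is in how the meet is handled.

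The paper gets closure under $\barwedge$ from Lemma \ref{lema_directed} and proves residuation by element-chasing with the sets $(U_0]$. In the forward direction it passes from $y \in Y \cap Z$ to $y \in W$ using only $Y \barwedge Z \subseteq W$. So it tacitly uses $Y \cap Z \subseteq Y \barwedge Z$, which it never justifies.

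You supply exactly this step. The set $(U] \cap (V] = \neg\neg U \cap \neg\neg V = \neg(\neg U \cup \neg V)$ lies in $D(X)$ and is a downset, so it equals its own down-closure. Hence $Y \cap Z \in \mathrm{Od}_{\alpha}(X)$ and $Y \barwedge Z = Y \cap Z$. Consequently $\mathrm{Od}_{\alpha}(X)$ is the family of open sets of a topology with base $\{(U] \colon U \in D(X)\}$, and $Y \rightarrowtail Z$ is the interior of $Y^{c} \cup Z$ in that topology. Distributivity and residuation then become set-theoretic. The paper's route is shorter; yours is complete and more conceptual.

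To make your key step airtight without any ``I expect'', two observations suffice:
\begin{itemize}
\item The identity $\neg W_1 \cap \neg W_2 = (W_1]^{c} \cap (W_2]^{c} = (W_1 \cup W_2]^{c} = \neg(W_1 \cup W_2)$ holds as sets in every $H_{0}^{\vee}$-space.
\item $(U] = \neg\neg U$, because $(U] \in D(X)$ is both an upset and a downset, so $(U]^{c}$ is a downset and $((U]^{c}]^{c} = (U]$.
\end{itemize}
With these, the appeal to Lemma \ref{lemaSergio}(3),(4) is unnecessary.
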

\begin{proof}
Let $Y,Z \in \mathrm{Od}_{\alpha}(X)$. By Lemma \ref{lema_directed} it follows $Y \barwedge Z, Y \rightarrowtail Z \in \mathrm{Od}_{\alpha}(X)$. We see that $Y \cup Z \in \mathrm{Od}_{\alpha}(X)$. It is clear that 
\[
\bigcup \{ (U] \colon (U] \subseteq Y \cup Z \text{ and } U \in D(X) \} \subseteq Y \cup Z.
\]
On ther other hand, since $Y,Z \subseteq \bigcup \{ (U] \colon (U] \subseteq Y \cup Z \text{ and } U \in D(X) \}$, we have 
\[
Y \cup Z \subseteq  \bigcup \{ (U] \colon (U] \subseteq Y \cup Z \text{ and } U \in D(X) \}. 
\]
Finally, we prove 
\[
Y \barwedge Z \subseteq W \Longleftrightarrow Y \subseteq Z \rightarrowtail W.
\]
Suppose $Y \barwedge Z \subseteq W$ and $Y \nsubseteq Z \rightarrowtail W$. Then there is $x \in Y$ such that $x \notin Z \rightarrowtail W$. So, there exists $U_{0} \in D(X)$ such that $(U_{0}] \subseteq Y$, $x \in (U_{0}]$ and $(U_{0}] \nsubseteq Z^{c} \cup W$. Thus, there exists $y \in (U_{0}] \subseteq Y$ and $y \in Z \cap W^{c}$. Consequently, $y \in Y \cap Z$ and by assumption, $y \in W$ which is a contradiction. 

Conversely, suppose $Y \subseteq Z \rightarrowtail W$. Let $x \in Y \barwedge Z$. So, $x \in Z \rightarrowtail W$, i.e., there exists $U_{0} \in D(X)$ such that $x \in (U_{0}]$ and $(U_{0}] \subseteq Z^{c} \cup W$. As $x \in Z$ and $x \in Z^{c} \cup W$ we have $x \in W$. Therefore, $Y \barwedge Z \subseteq W$.
\end{proof}

Let ${\bf{A}} \in \mathrm{SHil}_{0}^{\vee}$. By restricting the homomorphism $\beta$ of the Theorem \ref{theo_beta} to the set of $\alpha$-ideals we obtain that the Heyting algebra of $\alpha$-ideals of {\bf{A}} is isomorphic to the Heyting algebra of open $\alpha$-directed of the dual space $\langle \mathrm{X}(A), \mathcal{T}_{\mathcal{K}_{A}} \rangle$.

\begin{theorem}
Let ${\bf{A}} \in \mathrm{SHil}_{0}^{\vee}$. Then the map $\beta \colon \mathrm{Id}_{\alpha}(A) \to \mathrm{Od}_{\alpha}(\mathrm{X}(A))$ defined by (\ref{def_beta}) is a Heyting-isomorphism. Moreover, 
\[
\beta(I) = \bigcup \{ (\varphi(a)] \colon \neg \neg a \in I \},
\]
for all $I \in \mathrm{Id}_{\alpha}(A)$.
\end{theorem}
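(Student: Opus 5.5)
The plan is to use Theorem \ref{theo_beta}: $\beta$ is already a lattice isomorphism $\mathrm{Id}(A)\to\mathrm{Od}(\mathrm{X}(A))$ preserving implication. It then suffices to show three things. First, $\beta$ maps $\alpha$-ideals exactly onto open $\alpha$-directed subsets. Second, $\beta$ preserves the Heyting operations of the two structures. Third, the displayed formula for $\beta(I)$ holds. The basic tool is the identity $\varphi(\neg\neg a)=(\varphi(a)]$, obtained in the proof of Theorem \ref{charact dual space}.

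\textbf{The formula.} Let $I\in\mathrm{Id}_{\alpha}(A)$. If $\neg\neg a\in I$, then $(\varphi(a)]=\varphi(\neg\neg a)\subseteq\beta(I)$. Conversely, if $a\in I$, then $\neg\neg a\in I$ and $\varphi(a)\subseteq(\varphi(a)]$. Together these give
\[
\beta(I)=\bigcup\{(\varphi(a)]\colon \neg\neg a\in I\},
\]
and in particular $\beta(I)\in\mathrm{Od}_{\alpha}(\mathrm{X}(A))$.

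\textbf{Surjectivity.} Let $Y\in\mathrm{Od}_{\alpha}(\mathrm{X}(A))$. Write $Y=\bigcup\{(\varphi(a)]\colon a\in C\}=\bigcup\{\varphi(\neg\neg a)\colon a\in C\}$ for some $C\subseteq A$. Since $Y\in\mathrm{Od}(\mathrm{X}(A))$, Theorem \ref{theo_beta} gives $Y=\beta(J)$ with $J$ the ideal $\{b\colon\varphi(b)\subseteq Y\}$. We need $J$ to be an $\alpha$-ideal. Take $b\in J$, so $\varphi(b)\subseteq Y$. By compactness, $\varphi(b)$ is covered by finitely many $\varphi(\neg\neg a_i)$. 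Here I would use that the $\varphi(c)$ are the complements of basic compact opens, so the covering must be handled in the spectral (patch) sense. The alternative is to argue directly: for $x\in\varphi(\neg\neg b)=(\varphi(b)]$ there is $y\ge x$ with $y\in\varphi(b)\subseteq Y$, and $Y$ is a union of downsets $(U]$, so $Y$ is a downset and $x\in Y$. This gives $\varphi(\neg\neg b)\subseteq Y$ and hence $\neg\neg b\in J$. The direct route avoids compactness entirely, so I would use it.

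\textbf{Heyting operations.} Injectivity, $\cup$, and $\barwedge$ are inherited, because $\mathrm{Id}_{\alpha}(A)$ is closed under $\sqcup$ (Lemma \ref{lema_alpha_ideal generated} plus Proposition \ref{generaliz lemma sergio}(1)) and under $\cap$. Bounds are clear: $\beta(\{0\})=\emptyset$ and $\beta(A)=\mathrm{X}(A)$. For implication, note that the Heyting implication in a Heyting algebra is determined by the lattice order. Since $\beta$ is an order isomorphism between the two lattices $\mathrm{Id}_{\alpha}(A)$ and $\mathrm{Od}_{\alpha}(\mathrm{X}(A))$, it automatically preserves relative pseudocomplements. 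Here I use that $\mathrm{Id}_{\alpha}(\mathbf A)$ is a Heyting algebra (\cite{C0}), and that $\mathrm{Od}_{\alpha}$ is Heyting by the previous theorem.

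The main obstacle is the surjectivity step, specifically verifying that $\mathrm{Od}_{\alpha}$ sets are downsets and that $J$ is $\alpha$-closed. The downset argument above handles it.
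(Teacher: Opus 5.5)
Your proof is correct. The derivation of the formula for $\beta(I)$ is the same as the paper's, but surjectivity and the Heyting structure are handled differently.

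\textbf{Surjectivity.} The paper constructs the preimage explicitly. From $Y=\bigcup\{(\varphi(a)]\colon a\in B\}$ it forms the $\alpha$-ideal $\mathrm{Idg}(B)^{\alpha}$ of Lemma \ref{lema_alpha_ideal generated}. It then checks $\beta(\mathrm{Idg}(B)^{\alpha})=Y$ element by element, using Proposition \ref{generaliz lemma sergio}(1) to push $\neg\neg$ through finite joins and using primality of irreducible filters. You instead take the preimage $J=\{b\colon \varphi(b)\subseteq Y\}$ already supplied by Theorem \ref{theo_beta}. You show it is $\alpha$-closed using only that $Y$ is a downset and that $\varphi(\neg\neg b)=(\varphi(b)]$. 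Your route is shorter and uses the Stonean hypothesis only through that identity. It also shows that $\mathrm{Od}_{\alpha}(\mathrm{X}(A))$ consists exactly of the open directed subsets that are downsets. The paper's route has the advantage of naming the preimage in terms of the generating set $B$.

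The compactness detour you set aside would also have worked. $\varphi(b)$ is closed in the compact space $\mathrm{X}(A)$, and in the Stonean case $\varphi(\neg\neg a)=\varphi(\neg a)^{c}$ is open. Still, the downset argument is simpler.

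\textbf{Heyting structure.} The paper appeals to $\beta(I\twoheadrightarrow J)=\beta(I)\rightarrowtail\beta(J)$ from Theorem \ref{theo_beta}, where $\rightarrowtail$ is the implication of $\mathrm{Od}(\mathrm{X}(A))$. Identifying this with the implication on $\mathrm{Od}_{\alpha}$, which is defined through the sets $(U]$, needs a small check that the paper leaves implicit. One way is to verify that $I\twoheadrightarrow J$ is again an $\alpha$-ideal, via Lemma \ref{lemma_pri}(6). Your argument avoids that check entirely. It uses only that both structures are Heyting algebras ordered by inclusion (the cited facts) and that $\beta$ restricts to an order isomorphism between them. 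Such an isomorphism automatically preserves relative pseudocomplements.
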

\begin{proof}
First we prove $\beta(I) = \bigcup \{ (\varphi(a)] \colon \neg \neg a \in I \}$, for all $I \in \mathrm{Id}_{\alpha}(A)$.  Let $I \in \mathrm{Id}_{\alpha}(A)$ and $x \in \beta(I)$. Then, by Theorem \ref{theo_beta}, there exists $a \in I$ such that $x \in \varphi(a)$, i.e., $a \in x$. As $a \leq \neg \neg a$ we have $\neg \neg a \in x$ and so, $x \in \varphi(\neg \neg a)$. By Theorem \ref{charact dual space} it follows  $x\in (\varphi(a)]$. Moreover, since $I$ is an $\alpha$-ideal and $a\in I$, $\neg \neg a \in I$ . Consequently, $x \in \bigcup \{ (\varphi(a)] \colon \neg \neg a \in I \}$. Conversely, if $x \in \bigcup \{ (\varphi(a)] \colon \neg \neg a \in I \}$, then there exists $\neg \neg a \in I$ such that $x \in (\varphi(a)]=\varphi(\neg \neg a)$. So, $\neg \neg a \in x \cap I$ and $x \in \beta(I)$.

By Theorem \ref{theo_beta}, the map $\beta$ is a lattice-isomorphism such that $\beta(I \twoheadrightarrow J) = \beta(I) \rightarrowtail \beta(J)$, for all $I,J \in \mathrm{Id}_{\alpha}(A)$. Hence, $\beta$ is a Heyting-homomorphism.

It only remains to prove that $\beta$ is onto. Let $Y \in \mathrm{Od}_{\alpha}(\mathrm{X}(A))$. Then there exists $B \subseteq A$ such that $Y = \bigcup \{ (\varphi(a)] \colon a \in B \}$. Consider the ideal generated $\mathrm{Idg}(B)$. By Lemma \ref{lema_alpha_ideal generated}, the set
\[
\mathrm{Idg}(B)^{\alpha} = \{ a \in A \colon \exists x \in \mathrm{Idg}(B) (a \leq \neg \neg x) \}
\]
is an $\alpha$-ideal. We prove that $\beta(\mathrm{Idg}(B)^{\alpha})=Y$. If $x \in \beta(\mathrm{Idg}(B)^{\alpha}) = \bigcup \{ (\varphi(a)] \colon \neg \neg a \in \mathrm{Idg}(B)^{\alpha} \}$, then there exists $\neg \neg a \in \mathrm{Idg}(B)^{\alpha}$ such that $x \in (\varphi(a)] = \varphi(\neg \neg a)$. So, there is $b \in \mathrm{Idg}(B)$ such that $\neg \neg a \leq \neg \neg b$, i.e., there exist $b_{1}, \ldots, b_{n} \in B$ such that $b \leq b_{1} \vee \ldots \vee b_{n}$ and $\neg \neg a \leq \neg \neg b$. By Proposition \ref{generaliz lemma sergio}, 
\[
\neg \neg a \leq \neg \neg b \leq \neg \neg (b_{1} \vee \ldots \vee b_{n}) = \neg \neg b_{1} \vee \ldots \vee \neg \neg b_{n}.
\]
Since $\neg \neg a \in x$ we have $\neg \neg b_{1} \vee \ldots \vee \neg \neg b_{n} \in x$ and as $x$ is a prime implicative filter, there is $i \in \{1, \ldots, n \}$ such that $\neg \neg b_{i} \in x$. So, $x \in \varphi(\neg \neg b_{i}) = (\varphi(b_{i})]$, with $b_i \in B$. Consequently,  $x \in \bigcup \{ (\varphi(a)] \colon a \in B \} = Y$.

For the other inclusion, let us take $x \in Y$. Then there is $a \in B$ such that $x \in (\varphi(a)] = \varphi(\neg \neg a)$. Thus, $\neg \neg a \in x$. Since $a \in B$, we have $a \in \mathrm{Idg}(B)$ and $\neg \neg a \in \mathrm{Idg}(B)^{\alpha}$. Hence, $\neg \neg a \in x \cap \mathrm{Idg}(B)^{\alpha}$ and $x \in \beta(\mathrm{Idg}(B)^{\alpha})$. Therefore, $\beta(\mathrm{Idg}(B)^{\alpha}) = Y$.
\end{proof}

\end{document}